\newtheorem{theorem}{Theorem}[section]
\newtheorem{lemma}[theorem]{Lemma}
\theoremstyle{definition}
\newtheorem{remark}[theorem]{Remark}
\numberwithin{equation}{section}
\title[Nonlinear fractional Schr\"odinger system]
{Existence and non-existence of normalized solutions for  a  nonlinear fractional Schr\"odinger system} 
\author[Chungen Liu, Zhigao Zhang and JiaBin Zuo]{}
\subjclass{ Primary: 35J50, 35J60; Secondary: 49J50. }
\keywords{Nonlinear Schr\"odinger system, fractional Laplacian,  normalized solutions, constrained minimization. }
\thanks{The first  author was supported by the National Natural Science Foundation of China (Grant No.12171108). The third  author was supported by the Guangdong Basic and Applied Basic Research
	Foundation (2024A1515012389).
}
\thanks{$^*$Corresponding author: Jiabin Zuo}
\begin{document}
\maketitle
\centerline{\scshape
     Chungen Liu$^{{\href{liucg@nankai.edu.cn}{\textrm{\Letter}}}}$,
	 Zhigao Zhang$^{{\href{Zhangzhigao@e.gzhu.edu.cn}{\textrm{\Letter}}}}$ and
	 	 Jiabin Zuo$^{{\href{zuojiabin88@163.com}{\textrm{\Letter}}}*}$}

\medskip

{\footnotesize
	\centerline{School of Mathematics and Information Science, Guangzhou University,}
	
	 \centerline{Guangzhou, 510006, PR China}
} 


\bigskip

\centerline{(Communicated by Luc Nguyen)}


\begin{abstract}
This paper is concerned with a nonlinear fractional Sch\"ordinger system  in $\mathbb{{R}}$  with   intraspecies interactions $a_{i}>0 \ (i=1,2)$ and interspecies interactions $\beta \in\mathbb{{R}}$. We study this system  by solving an associated constrained minimization problem (i.e., $L^2-$norm constaints). Under certain assumptions on the trapping potentials $V_i(x) \ (i=1,2),$  we  derive some delicate estimates  for  the  related energy functional and establish a criterion for the existence and non-existence of solutions, in
which way several existence results are obtained.
\end{abstract}


\section{Introduction}\label{Intro}

	In this paper, we study the following system of two coupled fractional  Schr\"{o}dinger equations with trapping potentials in $\mathbb{R}$:\\
\begin{equation}\label{system1.1}
	\begin{cases}
		\sqrt{-\Delta }u_{1}+V_{1}(x)u_{1} =\mu_{1}u_{1}+a_{1} u_{1}^{3}+\beta u_{2}^{2}u_ {1},\\
		\sqrt{-\Delta }u_{2}+V_{2}(x)u_{2} =\mu_{2}u_{2}+a_{2} u_{2}^{3}+\beta u_{1}^{2}u_ {2},
	\end{cases}
\end{equation}\\
which $\sqrt{-\Delta }$ is the $\frac{1}{2}$-Laplace operator defined in the Sobolev space $H^{\frac{1}{2}}(\mathbb{R} )$ by the Fourier transform, that is,
\begin{equation}\label{four}
	\sqrt{-\Delta  } u:=\mathscr{F}^{-1}(\left | \xi  \right | \mathscr{F}u(\xi)),\ \ u\in H^{\frac{1}{2} }(\mathbb{R}),
\end{equation}
in which $	\mathscr{F}u(\xi )$ stands for the Fourier transform of $u$, i.e.,
\begin{equation*}
	\mathscr{F}u(\xi )=\frac{1}{\sqrt{2\pi}}\int_{\mathbb{R}}\exp^{- i\xi \cdot x}u(x)dx,\ \ u\in H^{\frac{1}{2} }(\mathbb{R})\ \ \text{and }
\end{equation*}
\begin{equation*}
	H^{\frac{1}{2} }(\mathbb{R})=\{ u\in L^{2} (\mathbb{R} ):\left \| u\right \|_{ H^{\frac{1}{2} }(\mathbb{R})}^{2}:=\int_{\mathbb{R}}(1+2\pi\left | \xi  \right | ) \Big| \mathscr{F}u(\xi)  \Big|^{2}d\xi< \infty   \} .
\end{equation*}
Moreover, for $i=1,2$, $V_{i}(x)$ is a certain type of trapping potential, $\mu_{i}\in \mathbb{R}$ is a suitable Lagrange multiplier which denotes the chemical potential, $a_{i}>0$ $(\text{resp}.<0)$ represents the attractive (resp. repulsive) intraspecies interaction strength inside each component, $\beta> 0$ $(\text{resp}. < 0)$ characterizes the attractive (resp. repulsive)   interspecies interaction strength   between two components.\\
In what follows,  the complex $L^{2}$ inner product is denoted by $(\cdot ,\cdot)$, i.e.,
\begin{equation*}
	(u,v):=\int_{\mathbb{R}}u\overline{v}dx, \ \  \forall u,v\in L^{2}(\mathbb{R},\mathbb{C}),
\end{equation*}
and then
\begin{equation}\label{exch}
	(\sqrt{-\Delta  }u,u ):=\int_{\mathbb{R}}\left | \xi  \right | \Big| \mathscr{F}u(\xi)  \Big|^{2}d\xi,\ \ \:  u\in
	H^{\frac{1}{2}}(\mathbb{R}).
\end{equation}
Moreover,  the norm  $\left \| \cdot  \right \| _{H^{\frac{1}{2}}(\mathbb{{R}})} $   can be given by
\begin{equation*}
	\left \| u  \right \| _{H^{\frac{1}{2}}(\mathbb{{R}})}^2 :=\int_{\mathbb{R}}\Big[u(-\Delta )^{\frac{1}{2} } u+u^{2}\Big]dx=\int_{\mathbb{R}}\Big[|(-\Delta )^{\frac{1}{4} } u|^{2}+u^{2}\Big]dx.
\end{equation*}

It is well-known that system (\ref{system1.1}) arises from the following one-dimensional fractional Schr\"{o}dinger equation in \cite{moxing},
\begin{equation}\label{ori}
	i\partial_t \psi(x,t) = (-\Delta)^{\alpha/2} \psi + V(x)\psi + \beta |\psi|^2 \psi, \quad x \in \mathbb{R},\ t > 0,
\end{equation}
where $\alpha \in(0,2)$ is the L\'{e}vy index, and $i=\sqrt{-1}$ is the imaginary unit. The constant $\beta\in\mathbb{R} $ describes the strength of local (or short-range) interactions between particles, and $V(x)$ represents an external trapping potential. $\psi(x,t)$ is a complex-valued wave function given by $\psi(x,t) = e^{-i\mu t} u(x)$, where $u(x)$ is a solution of the associated equation
\begin{equation}
	(-\Delta)^s u + V(x)u = \mu u + \beta u^3, \quad \text{in } \mathbb{R}.
\end{equation}
It is  known from \cite{moxing2}  that (\ref{ori}) is used to  describe the phenomenon in which the motion of bosons in non-Gaussian Bose-Einstein condensates follows Lévy flights instead of Gaussian paths. Furthermore, unlike the classical Laplacian, the fractional Laplacian is a non-local operator describing long-range interactions. It should be noted that, in the multi-component framework  (such as system (\ref{system1.1})), one has to take into account an interspecies interaction between each components, which may display more complicated physical phenomena, see \cite{He}. For more physical details, we refer the reader to \cite{add1, add2}.

In recent years, as a generalization of the classical Laplace operator, fractional Laplace operator holds broad application prospects in several areas such as physics, biology, chemistry, and finance, see  \cite{response1,response2}. Particularly in  Fourier analysis and nonlinear partial differential equations, the following fractional Schr\"{o}dinger equation (\ref{eq1.2}) with a nonlinear  term  has attracted much attention 
\begin{equation}\label{eq1.2}
	(-\Delta  )^{s}u+V(x)u=f(x,u), \ \  \: u\in H^{s}(\mathbb{R}^{N}).
\end{equation}

In \cite{x12}, under suitable assumptions for potential function $V(x)$ and nonlinear term $f(x,u)$, Felmer et al.  investigated the existence of  positive classical solutions for the system with $0<s<1$ and $N\ge2$ by using the critical point theory. It is noteworthy that, particularly in the case   $0< s< 1$ and $N=1$, through the variational method, Frank and Lenzmann \cite{x10} addressed the existence, uniqueness and a series of significant properties  of  the ground state on (\ref{eq1.2}) with $V(x)=1$ and $f(x,u)=u^{p+1}$, that is,
\begin{equation}\label{new classical}
	(-\Delta )^{s}u+u=u^{p+1},\ \ \: u\in H^{s}(\mathbb{{R}}).
\end{equation}
The famous work mentioned above and these important conclusions have opened up a new way for our investigations.
Later, in the general case  $0<s<1$ and  $N\ge 1$, the following equation
\begin{equation}\label{eq1.4}
	(-\Delta )^{s}u+u=|u|^{p}u,\ \ \: u\in H^{s}(\mathbb{{R}}^{N})
\end{equation}
was studied by \cite{x11}, from which we know that there is a unique  positive  solution $Q(x)\in H^{2s+1}(\mathbb{{R}}^{N})\cap C^{\infty }(\mathbb{{R}}^{N})$, which can be taken to be radially symmetric about the origin. Moreover,  $Q(x)$ is strictly decreasing in $|x-x_0|$,  and it satisfies
\begin{equation*}\label{eq1.5}
	\frac{C_{1} }{1+\left | x \right |^ {N+2s}} \le Q(x)\le \frac{C_{2}}{1+\left |x\right |^{N+2s}},\ \  \:x\in \mathbb{R}^{N}
\end{equation*}
with some constants $C_{2}\ge C_{1}>0$, which  generalizes the existential results and properties of solutions  in \cite{x10} in an optimal way.
In \cite{x9},  Du et al. considered a  mass critical stationary  fractional Schr\"odinger equation,  more  precisely, that is,
\begin{equation}\label{new}
	(-\Delta )^{s}u+V(x)u=\mu u+a|u|^{\frac{4s}{N}}u,\ \ x\in \mathbb{R}^{N},
\end{equation}
where $0 <s< 1$, $N\ge2$ and $\mu$ is a Lagrange multiplier. By applying constrained variational methods,  a complete classification of the existence and non-existence of $L^2$-normalized solutions  is established with certain conditions imposed on $V(x)$. For further related   results  on the fractional Schr\"{o}dinger equation, we  refer to \cite{x8,  x11, x21} and the references therein.

We know that the following classical Laplacian system in $\mathbb{{R}}^2$
\begin{equation}\label{system1.6}
	\begin{cases}
		-\Delta  u_{1}+V_{1}(x)u_{1} =\mu_{1}u_{1}+b_1 u_{1}^{3}+\beta u_{2}^{2}u_ {1},\\
		-\Delta  u_{2}+V_{2}(x)u_{2} =\mu_{2}u_{2}+b_2 u_{2}^{3}+\beta u_{1}^{2}u_ {2},
	\end{cases}
\end{equation}
which is used to model two-component Bose-Einstein condensates with  intraspecies and interspecies interactions. It is easy to see that when $u_{1}=0$ or $u_{2}=0$, or $u_{1}=u_{2}$, the system (\ref{system1.6}) can  be reduced to the single Schr\"{o}dinger equation (\ref{new}) with $s=1$ and $N=2$. In the past decade, extensive research has been conducted on the analogue issues associated with (\ref{system1.6}) for the classical Laplacian system, under variant conditions on $V_{i}(x)$, $b_{i}$ and $\beta$. In \cite{x14}, with $b_i>0$, $\beta>0$, and for a certain type of trapping potentials $V_{i}(x)$, Guo et al. investigated (\ref{system1.6})  by seeking minimizers of the associated variational functional $\tilde{e} (b_{1}, b_{2}, \beta)$ and then obtained the following  results in Theorem 1.1	\\

\noindent	(i). When $0<b_1<m^*$, $0<b_2<m^*$ and $\beta<\sqrt{(m^*-b_1)(m^*-b_2)}$, $\tilde{e} (b_{1}, b_{2}, \beta)$  has at
	least one minimizer. 

\noindent	(ii). Either $b_1>m^*$ or  $b_2>m^*$ or $\beta>\frac{m^*-b_1}{2}+\frac{m^*-b_2}{2}$, $\tilde{e} (b_{1}, b_{2}, \beta)$ has no minimizer.\\

\noindent Here, $m^*= \| \tilde{Q}  \| ^{2}_{2}$ is a critical threshold for existence intervals of solutions, where $\tilde{Q}$ is a positive solution to the following classical equation
			\begin{equation*}
		-\Delta u+u-u^3=0,\ \ u\in H^1(\mathbb{R}^2).
\end{equation*}
\noindent It is noteworthy that the existence of a positive solution to this classical equation is  necessary for establishing such existence intervals of solutions. For more details, see Theorems 1.1-1.3 in \cite{x14}.  	Later, in \cite{x15}, Guo, Zeng and Zhou generalized various existence results to the case $b_i>0$, $\beta<0$. For further details, see e.g. \cite{x1, x2, x4, x6,  x16, x18, x20}.

For the case of fractional Laplacian systems, in \cite{x23}, given a class of fractional elliptic systems of order $s\in(\frac{1}{4},1)$, Wang and Wei proved the uniqueness of the positive solutions, via the method of moving planes. In \cite{x17}, He et al. considered  a system with critical nonlinearities on a bounded domain in $\mathbb{R}^{N}$. With the help of  the Nehari manifold decomposition, they proved that the system admits at least two positive solutions when the pair of parameters $(\lambda,\mu)$ belongs to a certain subset of $\mathbb{R}^{2}$. In \cite{x7}, Du and Mao investigated a fractional Schr\"{o}dinger system with linear and nonlinear couplings, where $s\in{(\frac{3}{4},1)}$ and the potential function $V(x)$ satisfies $$\mathop{\mathrm{sup\mathrm{} } }\limits _{x\in \mathbb{R}^{3} }V(x)=\displaystyle\lim_{\lvert  x\rvert \rightarrow +\infty }V(x)=\Lambda >0 ~\text{and}~ \mathop{\mathrm{inf\mathrm{} } }\limits _{x\in \mathbb{R}^{3} }V(x)\ge 0,$$ the existence of ground state solutions for the fractional Laplacian systems was obtained by using variational methods. In \cite{x24}, Wang et al. demonstrated the existence of $L^{2}$-normalized solutions for a nonlinear pseudo-relativistic elliptic system involving the operator  $\sqrt{-\Delta  +m^{2}}$ within  certain ranges of the three parameters $a_1, a_2, \beta$.

Inspired by \cite{x14}, we first consider the following fractional Laplacian system in $\mathbb{R}^N$ 
\begin{equation}\label{sys3}
	\begin{cases}
		({-\Delta })^su_{1}+V_{1}(x)u_{1} =\mu_{1}u_{1}+a_{1} u_{1}^{3}+\beta u_{2}^{2}u_ {1}, \\
		({-\Delta })^su_{2}+V_{2}(x)u_{2} =\mu_{2}u_{2}+a_{2} u_{2}^{3}+\beta u_{1}^{2}u_ {2},
	\end{cases}
\end{equation} 
which corresponds to the $N$-dimensional classical equation (\ref{eq1.4}) with $p=2$, that is,
\begin{equation}\label{eqadd}
	(-\Delta )^{s}u+u=|u|^{2}u,\ \ \: u\in H^{s}(\mathbb{{R}}^{N}).
\end{equation}			
For system (\ref{sys3}), in order to establish the results  analogous to Theorems 1.1-1.3 in \cite{x14}, we  follow the approach in \cite{x14}  and then assume  that  there exists  a positive solution to (\ref{eqadd}).  Furthermore, as shown in Theorem 1.1 of  \cite{x9},  we find that (\ref{eqadd})  is a special case of the following equation,   
\begin{equation}\label{relation}
	\frac{N}{2s}(-\Delta)^su+\left(1+\frac{1}{2}\left(2-\frac{N}{s}\right)\right)u-|u|^2u=0,\quad u\in H^s(\mathbb{R}^N).
\end{equation}
A comparison of the coefficients in  (\ref{eqadd})  and  (\ref{relation}) shows that the parameters $s$ and $N$ can only satisfy either the conditions $s=1$ and $N=2$, or $s=\frac{1}{2}$ and $N=1$. This is because in order to determine the optimal constant $2/\left \| Q \right \| ^{2}_{2}$ in \eqref{eq1.10} generated by the contraction of the fractional Gagliardo-Nirenberg-Sobolev inequality, which is a very important indicator for the existence interval of solutions,  (\ref{eqadd})  should satisfy the form of \eqref{relation}. The former corresponds to system (\ref{system1.6}) has been examined in \cite{x14}. Therefore,  we will consider the  latter that is the system (\ref{system1.1}) in $\mathbb{R}$, which incorporates  the non-local $\frac{1}{2}$-Laplace operator  and the trapping potentials $V_{i}(x) \ (i=1,2)$ with  the following assumptions:\\

$(\mathcal{D}_{1})$ $V_{i}(x)\in L^{\infty } _{loc}(\mathbb{R}) $,  $\displaystyle\lim_{\lvert  x\rvert \rightarrow \infty }V_{i}(x)=\infty $  and  $\mathop{\mathrm{inf\mathrm{} } }\limits _{x\in \mathbb{R} }V_{i}(x)=0$, $i=1,2$.\ \

$(\mathcal{D}_{2})$ $\mathop{\mathrm{inf\mathrm{} } }\limits _{x\in \mathbb{R} }\big(V_{1}(x)+V_{2}(x)\big)$  and  $\mathop{\mathrm{inf\mathrm{} } }\limits _{x\in \mathbb{R} }V_{i}(x) \ (i=1,2)$  are   attained.\\

In order to obtain our results, we consider the following constrained minimization problem
\begin{equation}\label{problem 1.7}	
	\hat{e} (a_{1}, a_{2}, \beta):=\mathop{\mathrm{inf\mathrm{} } }\limits _{(u_{1}, u_{2})\in \mathcal{M}}E_{a_{1}, a_{2} , \beta}(u_{1}, u_{2}),
\end{equation}
where the corresponding variational function $	E_{a_{1}, a_{2} , \beta}(u_{1}, u_{2})$ is defined by
\begin{equation}\label{eq1.9}
\begin{split}
		E_{a_{1}, a_{2} , \beta}(u_{1}, u_{2})=&\sum_{i=1}^{2} \int_{\mathbb{R}} [u_{i}\sqrt{-\Delta  } u_{i}+V_{i}(x)u_{i}^{2}]dx - \sum_{i=1}^{2}\frac{a_{i}}{2}\int_{\mathbb{R}}\left | u_{i} \right |^{4}dx   \\
	&-\beta\int_{\mathbb{R}}  \left | u_{1} \right |^{2}\left | u_{2} \right |^{2} dx,
\end{split}
\end{equation}
which is defined on the space
\begin{equation*}
	\mathcal{X}=\mathcal{H}_{1}\times\mathcal{H}_{2},\ \  \mathcal{H}_{i}=\{ u\in H^{\frac{1}{2} }(\mathbb{R}): \int_{\mathbb{R} }V_{i}(x)u^{2}dx<\infty \}\ \ (i=1,2) \quad
\end{equation*}
with associated norm
\begin{equation}\label{eq1.8}
	\left \| u  \right \| _{\mathcal{H}_{i} }^{2}=\int_{\mathbb{R} }\left [  | (-\Delta  ) ^{\frac{1}{4} }u|^{2}+V_{i}(x)u^{2} \right ]dx, \ \  \left \| u \right \|_{\mathcal{X}}^{2}=\left \| u \right \|_{\mathcal{H}_{1}}^{2}+\left \| u \right \|_{\mathcal{H}_{2}}^{2},\ \  i=1,2.
\end{equation}
Here,  $\mathcal{M}$ is the so-called mass constraint, which is given by
\begin{equation*}
	\mathcal{M} =\left \{ (u,v)\in \mathcal{X}:\int_{\mathbb{R} }u^{2}dx = \int_{\mathbb{R} }v^{2}dx =1\right \} .
\end{equation*}
The solution found on $\mathcal{M}$ is called a normalized solution, and it represents a kind of mass conservation in physics, which makes our study more meaningful. Inspired by \cite{x14}, we  investigate  problem (\ref{problem 1.7}) by formulating an appropriate auxiliary minimization problem, based on which we can establish a criterion on the existence of minimizers for the problem (\ref{problem 1.7}), as detailed in our Theorem \ref{theorem 2.4}, which plays an important role in proving the main results of this paper. However, our paper will take different approaches as in  \cite{ADD} to give a  proof of Theorem \ref{theorem 2.4}, and note that in \cite{x14} it only considered the case of $\beta>0$. Meanwhile, the presence of the non-local operator $\sqrt{-\Delta }$ presents a complex challenge in establishing this criterion, the difficulty of which is the selection of test functions and delicate estimates, see (\ref{eq2.48})-(\ref{eq2.49}).
Furthermore,  the approaches  employed in papers \cite{x14} do not work   in proving  Theorem \ref{theorem1.3},  as it remains uncertain whether we can derive the same sequence $\left \{ y_{\epsilon _{n}}\right \} $ from $\left \{ (\tilde{w}_{1n}  ,\tilde{w}_{2n}) \right \} $, where $\{\tilde{w}_{1n}\}$ satisfies (\ref{eq3.27})  and $\{\tilde{w}_{2n}\}$ satisfies (\ref{eq new}). Therefore, we borrow some techniques from \cite{x24} (for pseudo-relativistic operator systems) to overcome this difficulty.

We now define $a^{*}:=\left \| Q \right \|_{2}^{2}$, that is,  the square of the $L^2$-norm of the unique  positive  radially symmetric solution (\cite{x10,x11}) of the following   classical equation
\begin{equation}\label{classical eq}
	\sqrt{-\Delta }u+u=u^{3},\ \ \:u\in H^{\frac{1}{2}}(\mathbb{{R}}).
\end{equation}
Moreover, it is shown in Proposition 3.1 of \cite{x11} that every positive solution of (\ref{classical eq}) satisfies the following decay rate
\begin{equation}\label{decay}
	Q(x)=O(|x|^{-2})\ \ \text{as}\ \  |x|\to \infty.
\end{equation}
From Theorem 1.1 of \cite{x9}, we  know that the Gagliardo-Nirenberg inequality, which corresponds to (\ref{classical eq}), satisfies the following type
\begin{equation}\label{eq1.10}
	\int_{\mathbb{R}}\left | u \right |^{4}dx\le \frac{2}{\left \| Q \right \| ^{2}_{2}} \int_{\mathbb{R}}| \left ( -\Delta   \right )^{\frac{1}{4}}u   |^{2}dx    \int_{\mathbb{R}}\left | u \right |^{2}dx,\ \ \:u\in H^{\frac{1}{2}}(\mathbb{{R}}).
\end{equation}
Besides, by  Lemma 5.4 of \cite{x10},  $Q(x)$ satisfies the following Pohozaev
identity
\begin{equation}\label{eq1.11}
	|| \left ( -\Delta   \right )^{\frac{1}{4} } Q ||_{2}^{2}= \left \| Q \right \|_{2}^{2}=\frac{1}{2}\int_{\mathbb{R}} |Q|^{4}dx  .
\end{equation}

Now, we introduce the following theorem concerning the existence and non-existence of minimizers of problem (\ref{problem 1.7}).
\begin{theorem} \label{theorem1.1}
	Let $Q(x)$ be the unique positive radially symmetric solution of  (\ref{classical eq}) and $a^{*}:=\left \| Q \right \|_{2}^{2}$. Suppose that  $(\mathcal{D}_{1})$ holds, and denote
	\begin{equation}\label{beta}
		\beta_{*}:=\sqrt{(a^{*}-a_{1})(a^{*}-a_{2})} \ \ and \ \ \beta^{*}:=\frac{a^{*}-a_{1}}{2} +\frac{a^{*}-a_{2}}{2}.
	\end{equation}
	Then we have:
	
	\noindent\textnormal{(i)} If $0<a_{1}, a_{2}<a^{*}$ and $\beta<\beta_{*}$, there exists a minimizer for  problem (\ref{problem 1.7}).\\
	\noindent\textnormal{(ii)} If $a_{1}>a^{*}$, or $a_{2}>a^{*}$, or $\beta>\beta^{*}$, the problem (\ref{problem 1.7}) has no minimizer.
\end{theorem}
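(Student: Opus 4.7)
The plan is to follow the scheme of \cite{x14} adapted to the nonlocal half-Laplacian, combining the sharp fractional Gagliardo--Nirenberg inequality (\ref{eq1.10}), the compact embedding induced by the trapping potentials, and concentrating test functions built from the ground state $Q$. I treat the two parts in sequence.

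For part (i), I would first establish that $E_{a_{1},a_{2},\beta}$ is coercive on $\mathcal{M}$. For $(u_{1},u_{2})\in\mathcal{M}$, applying (\ref{eq1.10}) together with the constraints $\|u_{i}\|_{2}=1$ yields $\tfrac{a_{i}}{2}\int|u_{i}|^{4}\,dx\le\tfrac{a_{i}}{a^{*}}\|(-\Delta)^{1/4}u_{i}\|_{2}^{2}$; for $\beta>0$, Cauchy--Schwarz combined with (\ref{eq1.10}) bounds the cross term by $\beta\int|u_{1}|^{2}|u_{2}|^{2}\,dx\le\tfrac{2\beta}{a^{*}}\|(-\Delta)^{1/4}u_{1}\|_{2}\|(-\Delta)^{1/4}u_{2}\|_{2}$, while for $\beta\le0$ the term $-\beta\int|u_{1}|^{2}|u_{2}|^{2}\,dx$ is non-negative and only helps. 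A discriminant analysis of the resulting quadratic form in the variables $X_{i}=\|(-\Delta)^{1/4}u_{i}\|_{2}$ shows positive definiteness precisely when $\beta<\beta_{*}$, giving coercivity of $E_{a_{1},a_{2},\beta}$ relative to $\sum_{i}\|u_{i}\|_{\mathcal{H}_{i}}^{2}$. Any minimizing sequence $\{(u_{1n},u_{2n})\}$ is therefore bounded in $\mathcal{X}$; extracting a weakly convergent subsequence and invoking the compact embedding $\mathcal{H}_{i}\hookrightarrow L^{q}(\mathbb{R})$ for $q\in[2,\infty)$ --- a consequence of $V_{i}(x)\to\infty$ combined with a Riesz--Kolmogorov-type argument adapted to $H^{1/2}$ --- the mass constraints are preserved in the $L^{2}$ limit, the quartic and cross terms pass by strong $L^{4}$ convergence, and weak lower semicontinuity of $\|(-\Delta)^{1/4}\cdot\|_{2}^{2}$ and $\int V_{i}|\cdot|^{2}\,dx$ makes the weak limit a minimizer.

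For part (ii), I would construct concentrating test functions that force $\hat{e}(a_{1},a_{2},\beta)=-\infty$. Define $u_{\sigma}(x):=\sigma^{1/2}Q(\sigma(x-y_{0}))/\|Q\|_{2}$, so that $\|u_{\sigma}\|_{2}=1$. The Pohozaev identity (\ref{eq1.11}) and a change of variables give $\|(-\Delta)^{1/4}u_{\sigma}\|_{2}^{2}=\sigma$ and $\int u_{\sigma}^{4}\,dx=2\sigma/a^{*}$. For $\beta>\beta^{*}$, picking $y_{0}$ close to the infimum of $V_{1}+V_{2}$ and taking $(u_{1},u_{2})=(u_{\sigma},u_{\sigma})$ produces the leading behavior $E_{a_{1},a_{2},\beta}(u_{\sigma},u_{\sigma})=\tfrac{2\sigma}{a^{*}}(\beta^{*}-\beta)+o(\sigma)\to-\infty$. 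If $a_{i}>a^{*}$, fixing an admissible $u_{3-i}$ and letting $u_{i}=u_{\sigma}$ concentrate near the infimum of $V_{i}$ gives leading term $\sigma(1-a_{i}/a^{*})\to-\infty$ as $\sigma\to\infty$.

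The main obstacle I anticipate is controlling the potential contribution $\int V_{i}u_{\sigma}^{2}\,dx$ along the concentrating sequence in part (ii). Since $(\mathcal{D}_{1})$ guarantees only $\inf V_{i}=0$ without attainment, the center $y_{0}$ must be chosen along a sequence $\{y_{n}\}$ with $V_{i}(y_{n})\to0$, coupled with $\sigma_{n}\to\infty$ at a matched rate; the decay $Q(x)=O(|x|^{-2})$ from (\ref{decay}) is borderline for integrability of $V_{i}u_{\sigma_{n}}^{2}$ at infinity, and quantitative estimates are needed to ensure this term is $o(\sigma_{n})$ (this is the counterpart of the delicate test-function analysis flagged by the authors in (\ref{eq2.48})--(\ref{eq2.49})). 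A secondary technical difficulty in part (i) is verifying the compact embedding $\mathcal{H}_{i}\hookrightarrow L^{q}$ for the nonlocal operator $\sqrt{-\Delta}$ with merely $L^{\infty}_{\mathrm{loc}}$ potentials, which requires a fractional Rellich--Kondrachov argument together with truncation exploiting $V_{i}(x)\to\infty$.
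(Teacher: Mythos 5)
Your part (i) is essentially the paper's argument, merely unpackaged: the Gagliardo--Nirenberg bound (\ref{eq1.10}) plus Cauchy--Schwarz on the cross term and the positive-definiteness condition $\beta<\beta_{*}$ is exactly what the paper encodes through the auxiliary quotient $\Gamma(a_1,a_2,\beta)$ and the function $\kappa_{a_1,a_2,\beta^{+}}$ in (\ref{eq3.1}), and the existence step is then the coercivity inequality (\ref{eq2.28}) followed by the compact embedding of Lemma \ref{lemma2.1} (which the paper simply cites, so your appeal to a fractional Rellich--Kondrachov argument is acceptable), Lemma \ref{lemma limit} for the quartic and cross terms, and weak lower semicontinuity. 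So for (i) there is no real divergence, only a different packaging of the same estimate.

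Part (ii), however, has a genuine gap. Your concentrating functions $u_{\sigma}(x)=\sigma^{1/2}Q(\sigma(x-y_{0}))/\|Q\|_{2}$ need not be admissible: under $(\mathcal{D}_{1})$ the potentials are only locally bounded with $V_{i}(x)\to\infty$, with no growth restriction, while $Q^{2}$ decays only like $|x|^{-4}$ by (\ref{decay}); for, say, $V_{i}(x)=e^{|x|}$ one has $\int_{\mathbb{R}}V_{i}(x)u_{\sigma}^{2}\,dx=+\infty$, so $(u_{\sigma},u_{\sigma})\notin\mathcal{X}$ and the energy expansion you write is vacuous. Moreover, the remedy you anticipate --- centering at points $y_{n}$ with $V_{i}(y_{n})\to0$ and matching rates --- addresses the wrong issue: since the negative leading term is linear in the scale, any \emph{fixed} center $x_{0}$ suffices provided the potential contribution is finite and bounded, and no relation between $x_{0}$ and $\inf(V_1+V_2)$ is needed for Theorem \ref{theorem1.1}(ii). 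What is actually required is the compactly supported cutoff in the definition of $\psi_R$ in (\ref{eq2.33}); but once the cutoff is inserted, your exact identities $\|(-\Delta)^{1/4}u_{\sigma}\|_{2}^{2}=\sigma$ and $\int u_{\sigma}^{4}=2\sigma/a^{*}$ no longer hold, and because $\sqrt{-\Delta}$ is nonlocal the cutoff error in the kinetic term cannot be localized trivially: one needs the translation/scaling identities (\ref{eq2.37})--(\ref{eq2.39}), the commutator bound (\ref{eq2.43}), and the decay (\ref{decay}) to obtain the estimates (\ref{eq2.48})--(\ref{eq2.49}), together with (\ref{eq2.50}) for the potential term. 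Your proposal names this difficulty but supplies neither the cutoff construction nor these estimates, so the non-existence argument for both the $a_{i}>a^{*}$ and the $\beta>\beta^{*}$ cases is incomplete as written. (Structurally, your direct route of showing $\hat{e}(a_1,a_2,\beta)=-\infty$ is legitimate once the test functions are repaired --- it is how the paper treats $\beta>\beta^{*}$ --- whereas for $a_{i}>a^{*}$ the paper instead proves $\Gamma(a_1,a_2,\beta)<1$ and invokes Theorem \ref{theorem 2.4}(ii), which also covers negative $\beta$ via the estimate (\ref{eq2.51}); your sketch should at least note that for $\beta<0$ the cross term is controlled by $O(\sigma^{1/2})$ so it does not spoil the leading order.)
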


\begin{remark}
	From the definition of $\beta^+$ in (\ref{eq2.2}) and the proof of Theorem \ref{theorem1.1}, it is evident to see  that $\beta\le0$  is allowed. 	
\end{remark}
\begin{remark}
It is an open problem to determine a  criterion on the parameters ($a_1, a_2, \beta$)  which ensures the existence of minimizers for $s\neq \frac{1}{2}$.
\end{remark}
\begin{remark}
		It remains unknown whether there are other thresholds except  $a^*$, $\beta_{*}$ and  $\beta^{*}$ to ensure the existence of  minimizers.
\end{remark}

\begin{theorem}\label{theorem1.2}
	Suppose that   $(\mathcal{D}_{1})$ and $(\mathcal{D}_{2})$ hold, if \ $0<a_{1}\neq a_{2}<a^{*}$ satisfies
	\begin{equation*}
		\left | a_{1}-a_{2} \right | \le 2\sqrt{(a^{*}-a_{1})(a^{*}-a_{2})}=2\beta _{*},
	\end{equation*}
	then, there exists $\hat{\delta}\in (\beta_{*},\beta^{*}]$ such that problem (\ref{problem 1.7}) has a minimizer for any $\beta\in [\beta_{*}, \hat{\delta})$.
\end{theorem}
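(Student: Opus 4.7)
The plan is to combine the existence criterion of Theorem \ref{theorem 2.4} with a continuity argument in $\beta$. By Theorem \ref{theorem1.1}(i), minimizers of $\hat{e}(a_1,a_2,\beta)$ exist for every $\beta < \beta_*$; the task is therefore to bridge the critical case $\beta = \beta_*$ and then propagate to a right-neighborhood, a step in which the hypothesis $|a_1-a_2|\le 2\beta_*$ plays the decisive role.

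First, I would verify that $\beta \mapsto \hat{e}(a_1,a_2,\beta)$ is continuous (indeed non-increasing) on $(-\infty,\beta^{*}]$. Consequently, the strict inequality required by the criterion of Theorem \ref{theorem 2.4}, once shown at $\beta = \beta_*$, automatically persists on some right-neighborhood $[\beta_*,\beta_*+\eta)$. Setting
\[
\hat{\delta} := \sup\bigl\{\beta \in (\beta_*,\beta^{*}]\,:\,\text{the criterion of Theorem \ref{theorem 2.4} holds throughout } [\beta_*,\beta)\bigr\},
\]
Theorem \ref{theorem1.1}(ii) forces $\hat{\delta} \le \beta^{*}$, while the previous remark guarantees $\hat{\delta} > \beta_*$, giving $\hat{\delta} \in (\beta_*,\beta^{*}]$ as claimed.

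The technical heart of the argument is the critical value $\beta = \beta_*$. Using the common minimum $x_0$ of $V_1+V_2$ supplied by $(\mathcal{D}_2)$, I would test (\ref{problem 1.7}) with the rescaled ground-state pair
\[
u_i(x) = \frac{1}{\|Q\|_2}\, \tau^{1/2} Q\bigl(\tau(x-x_0)\bigr), \quad i=1,2,
\]
where $Q$ is the unique positive solution of (\ref{classical eq}) and $\tau > 0$ is tuned later. Inserting this pair into $E_{a_1,a_2,\beta_*}$ and invoking the Pohozaev identities (\ref{eq1.11}) together with the defining relation $\beta_*^{2} = (a^{*}-a_1)(a^{*}-a_2)$, the quartic and mixed interaction terms combine so that the leading order in $\tau$ reduces to a quadratic expression in the pair $(\sqrt{a^{*}-a_1},\sqrt{a^{*}-a_2})$. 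Since $|a_1-a_2| = (\sqrt{a^{*}-a_1}+\sqrt{a^{*}-a_2})\,|\sqrt{a^{*}-a_1}-\sqrt{a^{*}-a_2}|$, the hypothesis $|a_1-a_2|\le 2\beta_*$ is exactly the algebraic inequality needed to push $E_{a_1,a_2,\beta_*}(u_1,u_2)$, and hence $\hat{e}(a_1,a_2,\beta_*)$, strictly below the dissociation threshold appearing in Theorem \ref{theorem 2.4}.

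The main obstacle is precisely this critical computation. The sharp fractional Gagliardo--Nirenberg inequality (\ref{eq1.10}) saturates on $Q$, so no \emph{slack} is left in the Sobolev-type estimates to absorb the interspecies coupling; the entire existence margin must come from the algebraic restriction $|a_1-a_2|\le 2\beta_*$. A second subtlety is that since $a_1 \neq a_2$, the individually optimal scalings for the two components differ, so the single-scale ansatz above is suboptimal for each component separately; one has to check carefully that the combined quartic/interaction balance produced by the choice $\beta = \beta_*$ nevertheless lets the test pair beat the threshold under the stated hypothesis.
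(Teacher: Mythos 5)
There is a genuine gap: your argument targets a criterion that Theorem \ref{theorem 2.4} does not contain. The existence criterion there is $\Gamma(a_1,a_2,\beta)>1$, where $\Gamma$ is the auxiliary quotient minimization (\ref{problem2.1})--(\ref{eq2.2}); there is no ``dissociation threshold'' for $\hat e$. Hence the ``technical heart'' of your plan --- inserting the single-scale pair $u_1=u_2=\tau^{1/2}Q(\tau(x-x_0))/\|Q\|_2$ into $E_{a_1,a_2,\beta_*}$ and pushing $\hat e(a_1,a_2,\beta_*)$ below a threshold --- cannot close the proof even in principle: a test pair only produces an \emph{upper} bound on an infimum, whereas what is needed is the \emph{lower} bound $\Gamma(a_1,a_2,\beta_*)>1$, i.e.\ a strict, uniform improvement of the quotient over the whole constraint set, which is what makes minimizing sequences of (\ref{problem 1.7}) bounded via the coercivity estimate built from (\ref{eq1.10}). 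Tellingly, in your equal-profile computation the hypothesis $|a_1-a_2|\le 2\beta_*$ never actually enters (only the combination $a_1+a_2+2\beta_*$ appears), so the ansatz does not see where this condition is required. In the paper this step is exactly Lemma \ref{lemma2.3}(i): one takes a symmetrized minimizing sequence for $\Gamma(a_1,a_2,\beta_*)$, passes to limits by the compact embedding, obtains $\Gamma\ge 1$ through the function $\kappa_{a_1,a_2,\beta_*}$, and then rules out $\Gamma=1$ by a normalization argument that reduces to the strict monotonicity of $z(t)=2(t+t^2)/(a_1+a_2t^2+2\beta_*t)$ on $[1,1+\sqrt{2}]$ --- and it is precisely there that $|a_1-a_2|\le 2\beta_*$ (equivalently $a_2\le a_1+2\beta_*$) is used. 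Your proposal contains no substitute for this step.

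A secondary issue is the propagation to a right-neighborhood of $\beta_*$: what persists must be the condition $\Gamma(a_1,a_2,\beta)>1$, so the continuity you need is that of $\beta\mapsto\Gamma(a_1,a_2,\beta)$ (Lemma \ref{lemma2.2}, local Lipschitz continuity), combined with $\Gamma(a_1,a_2,\beta^*)\le 1$ from Lemma \ref{lemma2.3}(ii); continuity or monotonicity of $\hat e(a_1,a_2,\cdot)$, which is what you propose to verify, does not interact with the criterion of Theorem \ref{theorem 2.4} and yields no persistence statement. (Also, the common minimum point $x_0$ of $V_1+V_2$ from $(\mathcal{D}_2)$ plays no role in this theorem's proof; it is needed only for Theorem \ref{theorem1.3}.)
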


\begin{theorem}\label{theorem1.3}
	Suppose that   $(\mathcal{D}_{1})$ and $(\mathcal{D}_{2})$ hold,  if \  $0 < a_{1}= a_{2}<a^* $ and \ $\beta_{*}\le\beta\le\beta^{*}$ , then\\
	
	\noindent \textnormal{(i)} Problem (\ref{problem 1.7}) has no minimizer  if \ $\mathop{\mathrm{inf\mathrm{} } }\limits _{x\in \mathbb{R} }\big(V_{1}(x)+V_{2}(x)\big)=0$. \\
	\noindent\textnormal{(ii)} Problem (\ref{problem 1.7}) has at least a minimizer provided
	\begin{equation}\label{eq1.12}
		\hat{e} (a^{*}-\beta ,a^{*}-\beta,\beta )<\mathop{\mathrm{inf\mathrm{} } }\limits _{x\in \mathbb{R} }\big(V_{1}(x)+V_{2}(x)\big).
	\end{equation}
\end{theorem}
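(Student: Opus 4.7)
The plan is to exploit the fact that the hypotheses of Theorem \ref{theorem1.3} force the critical coupling $\beta=a^*-a$, since when $a_1=a_2=:a$ one has $\beta_*=\beta^*=a^*-a$. Combining the sharp fractional Gagliardo--Nirenberg inequality (\ref{eq1.10}) applied to each $\int|u_i|^4\,dx$ with the Cauchy--Schwarz bound $\int u_1^2u_2^2\,dx\leq(\int u_1^4\int u_2^4)^{1/2}$ produces, for every $(u_1,u_2)\in\mathcal{M}$, the pointwise-sharp lower bound
\begin{equation*}
E_{a,a,\beta}(u_1,u_2)\ \geq\ \frac{a^*-a}{a^*}\bigl(\sqrt{X_1}-\sqrt{X_2}\bigr)^2+\int_{\mathbb{R}}\bigl(V_1u_1^2+V_2u_2^2\bigr)dx,
\end{equation*}
where $X_i:=\int_{\mathbb{R}}|(-\Delta)^{1/4}u_i|^2dx$; the cancellation in the gradient--coupling part is exactly what the critical choice $\beta=a^*-a$ is designed for. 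This inequality is the common engine behind both parts.

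For part (i), the lower bound immediately gives $\hat e\geq\inf_{\mathbb{R}}(V_1+V_2)=0$. A matching upper bound is obtained with the symmetric test pair $u_1=u_2=(a^*)^{-1/2}\varepsilon^{-1/2}Q(\varepsilon^{-1}(\cdot-x_0))$, where $x_0$ realises $\inf(V_1+V_2)=0$ (so, using $V_i\geq 0$, both $V_i(x_0)=0$); the Pohozaev identity (\ref{eq1.11}) cancels every $O(\varepsilon^{-1})$ contribution and the $V_i$-terms vanish in the concentration limit, yielding $\hat e=0$. A hypothetical minimizer $(u_1^*,u_2^*)$ would have to simultaneously saturate the fractional Gagliardo--Nirenberg inequality for both components, saturate Cauchy--Schwarz on the cross term, and satisfy $\int V_i(u_i^*)^2dx=0$. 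The first two constraints force $|u_1^*|\equiv|u_2^*|$ to be a translate--rescale of the strictly positive ground state $Q$, while the third combined with $V_i(x)\to\infty$ as $|x|\to\infty$ delivers the contradiction.

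For part (ii) the plan is to apply the abstract existence criterion of Theorem \ref{theorem 2.4} to a minimizing sequence $\{(u_{1n},u_{2n})\}\subset\mathcal{M}$. Hypothesis (\ref{eq1.12}) is precisely the statement that $\hat e$ lies strictly below the ``concentration--at--infinity'' threshold $\inf(V_1+V_2)$, which is the gap Theorem \ref{theorem 2.4} converts into compactness. Boundedness in $\mathcal{X}$ and the absence of vanishing for such a sequence follow from the lower bound above together with the coercivity of $V_i$ supplied by $(\mathcal{D}_1)$. The main obstacle, already flagged in the commentary preceding the statement of Theorem \ref{theorem1.3}, is that the two components may a priori concentrate at \emph{different} sequences of centers $y_{1n},y_{2n}$; in that event the coupling $\beta\int u_{1n}^2u_{2n}^2dx$ disappears in the blow--up limit, and the single--component strategy of \cite{x14} does not yield a common center. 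To overcome this I would follow the technique of \cite{x24} developed for the pseudo--relativistic analogue: the symmetry and monotone decay of the Frank--Lenzmann ground state, together with the decay estimate (\ref{decay}), force $|y_{1n}-y_{2n}|$ to remain bounded; translating to a common center $y_{\varepsilon_n}\to y_0$, the blow--up energy converges to at least $(V_1+V_2)(y_0)\geq\inf(V_1+V_2)>\hat e$, contradicting minimality. Hence concentration is ruled out, the minimizing sequence converges strongly on $\mathcal{M}$, and its limit is the desired minimizer.
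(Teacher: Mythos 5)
Your treatment of part (i) is essentially the paper's own argument: your lower bound is just a repackaging of the decomposition \eqref{eq3.6}--\eqref{eq3.7} (with the $L^4$ terms traded for kinetic terms via \eqref{eq1.10}), and the equality analysis for a hypothetical minimizer (saturation of Gagliardo--Nirenberg forcing a translate/rescale of the everywhere-positive $Q$, versus $\int V_i|u_i|^2dx=0$ forcing compact support) is exactly \eqref{eq3.10}--\eqref{eq3.11}. One technical caveat: the uncut test pair $u_1=u_2=(a^*)^{-1/2}\varepsilon^{-1/2}Q(\varepsilon^{-1}(\cdot-x_0))$ need not belong to $\mathcal{H}_1\times\mathcal{H}_2$, since $Q^2$ decays only like $|x|^{-4}$ while $(\mathcal{D}_1)$ allows $V_i$ to grow arbitrarily fast; you must use the cut-off trial function $\psi_R$ of \eqref{eq2.33}, as the paper does in \eqref{eq3.5}, to obtain $\hat e(a^*-\beta,a^*-\beta,\beta)\le\inf_{x\in\mathbb{R}}(V_1(x)+V_2(x))$.

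Part (ii) has a genuine gap. First, Theorem \ref{theorem 2.4} cannot be the engine here: with $a_1=a_2=a^*-\beta$ the two-sided bound \eqref{eq2.4} gives $\Gamma(a_1,a_2,\beta)=1$ exactly, the borderline case that Theorem \ref{theorem 2.4} does not cover; its criterion is $\Gamma\gtrless1$, and it has nothing to do with comparing $\hat e$ to $\inf(V_1+V_2)$, so hypothesis \eqref{eq1.12} is not ``the gap Theorem \ref{theorem 2.4} converts into compactness.'' Second, your claim that boundedness of a minimizing sequence in $\mathcal{X}$ follows from the lower bound is false as stated: that bound controls only $(\sqrt{X_1}-\sqrt{X_2})^2$ and the potential integrals, so both kinetic energies may blow up simultaneously -- this is precisely the scenario that must be excluded. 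The paper's route is to assume unboundedness and reach a contradiction with \eqref{eq1.12}, and this requires machinery your sketch omits: Ekeland's variational principle to produce approximate Euler--Lagrange equations with multipliers $\mu_{1n}$ (a bare minimizing sequence gives no equation for the blow-up limits), the rescaled limits $(w_1,w_2)$ solving the system \eqref{eq3.39}, the Pohozaev-plus-Gagliardo--Nirenberg argument ruling out $w_2\equiv0$, and crucially the ground-state classification of the limit system (Lemmas \ref{app 1} and \ref{app2}) which forces $\|w_1\|_2=\|w_2\|_2=1$, i.e.\ no mass is lost. Only with full mass concentration does Fatou's lemma give $\hat e\ge V_1(h_0)+V_2(h_0)\ge\inf_{x\in\mathbb{R}}(V_1(x)+V_2(x))$, contradicting \eqref{eq1.12}; your proposed estimate ``blow-up energy at least $(V_1+V_2)(y_0)$'' silently assumes this no-mass-loss step, since otherwise one only gets $\|w_1\|_2^2V_1(h_0)+\|w_2\|_2^2V_2(h_0)$, which need not exceed $\hat e$. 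The ``common center / $|y_{1n}-y_{2n}|$ bounded'' issue you borrow from \cite{x24} is a real point, but it is not a substitute for the mass quantification, which is where the actual difficulty of part (ii) lies.
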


The structure of this paper is organized as follows. In Section \ref{section2}, we introduce an auxiliary minimization problem and derive the crucial Theorem \ref{theorem 2.4}. In Section \ref{section3}, we present the complete proofs of Theorems \ref{theorem1.1}, \ref{theorem1.2}, and \ref{theorem1.3}.

\section{Existence and non-existence of minimizers}  \label{section2}
In this section, we introduce an auxiliary minimization problem and then  address the proof of Theorem \ref{theorem 2.4}, which concerns the existence and nonexistence of minimizers for $	\hat{e} (a_{1}, a_{2}, \beta)$. To this end,  we first give a compactness result.

\begin{lemma}\label{lemma2.1}
	(\cite[Lemma 3.1]{x9}) Suppose  \  $0\le V_{i}(x)\in L^{\infty } _{loc}(\mathbb{R}) $ \ satisfies  $\displaystyle\lim_{\lvert  x\rvert \rightarrow \infty }V_{i}(x)\\=\infty $, \  $i=1,2$. Then for all $2\le q < \infty$, the embedding $\mathcal{H}_{1}\times\mathcal{H}_{2}\hookrightarrow L^{q}(\mathbb{R} )\times L^{q}(\mathbb{R} )$ is compact.
\end{lemma}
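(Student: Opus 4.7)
\smallskip

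\noindent\textbf{Proof proposal for Lemma \ref{lemma2.1}.}

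The plan is to prove the compactness componentwise (so it suffices to show that the single embedding $\mathcal{H}_i \hookrightarrow L^q(\mathbb{R})$ is compact for each $i=1,2$ and each fixed $q\in[2,\infty)$), and then to combine the two factors. Fix $i$ and let $\{u_n\}\subset \mathcal{H}_i$ be a bounded sequence, say $\|u_n\|_{\mathcal{H}_i}\le C$. By the definition of $\|\cdot\|_{\mathcal{H}_i}$ and the fact that $V_i\ge 0$, this in particular gives $\|(-\Delta)^{1/4}u_n\|_2 \le C$ and $\|u_n\|_2 \le C$ (since $V_i$ is bounded below away from zero outside a bounded set thanks to $V_i(x)\to\infty$, while locally $u_n\in L^2$ follows from the fractional Sobolev embedding in dimension $N=1$). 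Hence $\{u_n\}$ is bounded in $H^{1/2}(\mathbb{R})$, so after passing to a subsequence we may assume $u_n \rightharpoonup u$ weakly in $H^{1/2}(\mathbb{R})$ and also weakly in $\mathcal{H}_i$.

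First I would upgrade weak convergence to strong convergence in $L^2(\mathbb{R})$. For every $R>0$, the local compact embedding $H^{1/2}(B_R)\hookrightarrow\hookrightarrow L^2(B_R)$ (a standard Rellich-type result for fractional Sobolev spaces) yields $u_n\to u$ strongly in $L^2(B_R)$. To control the tail, exploit the coercivity condition $\displaystyle\lim_{|x|\to\infty}V_i(x)=\infty$: given $\ep>0$, choose $R_\ep$ so large that $V_i(x)\ge 1/\ep$ for $|x|\ge R_\ep$. Then
\begin{equation*}
\int_{|x|\ge R_\ep}u_n^2\,dx \;\le\; \ep\int_{|x|\ge R_\ep}V_i(x)u_n^2\,dx \;\le\; \ep\,\|u_n\|_{\mathcal{H}_i}^2 \;\le\; \ep\,C^2,
\end{equation*}
and the same bound holds for the limit $u$ by Fatou. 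Combining the local strong $L^2$-convergence with the uniform smallness of the tails shows $u_n\to u$ in $L^2(\mathbb{R})$.

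Next I would promote the $L^2$ convergence to $L^q$ for any $2\le q<\infty$ by interpolation. Since $N=1$ and $s=1/2$, the Sobolev critical exponent is $+\infty$, so the fractional Sobolev embedding gives $H^{1/2}(\mathbb{R})\hookrightarrow L^r(\mathbb{R})$ for every $r\in[2,\infty)$. For a fixed $q\in[2,\infty)$, pick any $r>q$, so $\|u_n-u\|_r$ is uniformly bounded. Writing $q=2\theta + r(1-\theta)$ for a suitable $\theta\in(0,1]$ and applying the standard $L^p$-interpolation inequality,
\begin{equation*}
\|u_n-u\|_q \;\le\; \|u_n-u\|_2^{\theta}\,\|u_n-u\|_r^{1-\theta} \;\longrightarrow\; 0,
\end{equation*}
which gives strong convergence in $L^q(\mathbb{R})$. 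Applying this conclusion separately in each factor $\mathcal{H}_1$ and $\mathcal{H}_2$ and extracting a common subsequence delivers the compact embedding $\mathcal{H}_1\times\mathcal{H}_2 \hookrightarrow L^q(\mathbb{R})\times L^q(\mathbb{R})$.

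The main obstacle is not the interpolation or the local Rellich step, both of which are standard; the delicate point is justifying that the trapping condition $V_i\to\infty$ really does force $L^2$ tail vanishing uniformly in $n$ when all one controls is the weighted norm $\|u_n\|_{\mathcal{H}_i}$, and that the local fractional Rellich compactness from $H^{1/2}(B_R)$ genuinely applies despite the nonlocal character of $(-\Delta)^{1/4}$. The latter is usually handled by noting that the restriction of an $H^{1/2}(\mathbb{R})$-function to $B_R$ lies in $H^{1/2}(B_R)$ with controlled norm, and then invoking the classical compact embedding for bounded domains — a point one has to verify carefully because cut-offs do not commute with fractional Laplacians in the clean way they do in the local setting.
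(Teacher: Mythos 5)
The paper itself does not prove this lemma (it is quoted from \cite[Lemma 3.1]{x9}), so your argument has to stand on its own; the route you take — local fractional Rellich compactness plus tail control from $V_i(x)\to\infty$, then $L^2$--$L^r$ interpolation using $H^{1/2}(\mathbb{R})\hookrightarrow L^r(\mathbb{R})$ for all finite $r$ — is the standard one and is essentially the right scheme. There is, however, one step whose justification fails as written: the uniform bound $\|u_n\|_{L^2(\mathbb{R})}\le C$ for an $\mathcal{H}_i$-bounded sequence. Since $(\mathcal{D}_1)$ allows $\inf_{\mathbb{R}}V_i=0$, the $\mathcal{H}_i$-norm does not directly dominate $\|u_n\|_2$; your tail argument (choose $R_0$ with $V_i\ge 1$ for $|x|\ge R_0$) is fine, but for the local part you appeal to ``the fractional Sobolev embedding in dimension $N=1$''. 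This is precisely the borderline case $N=2s$: the seminorm $\|(-\Delta)^{1/4}u\|_2$ alone controls no $L^q$-norm on a ball (adding a large constant changes the local $L^2$-norm but not the seminorm), and $\dot H^{1/2}(\mathbb{R})$ embeds only into BMO. What your citation gives is membership $u_n\in L^2_{loc}$, which is already contained in $u_n\in H^{1/2}(\mathbb{R})$, not a uniform bound. Without that uniform bound you cannot extract a weakly convergent subsequence in $H^{1/2}(\mathbb{R})$ nor apply the local Rellich step, so this is a genuine gap.

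It is a fixable gap, because the bound is true: take a ball $B$ containing the bounded set $\{V_i<1\}$ and such that $|B\cap\{V_i\ge1\}|>0$; the fractional Poincar\'e inequality on $B$ (the Gagliardo double integral over $B\times B$ with kernel $|x-y|^{-2}$ controls $\|u_n-\overline{u}_n\|_{L^2(B)}^2$, where $\overline{u}_n$ is the mean over $B$) gives $\|u_n-\overline{u}_n\|_{L^2(B)}\le C\|(-\Delta)^{1/4}u_n\|_2$, and the mean $\overline{u}_n$ is then controlled through $B\cap\{V_i\ge1\}$, where $\int u_n^2\le\int V_iu_n^2\le\|u_n\|_{\mathcal{H}_i}^2$. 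Combining with your tail estimate yields $\|u_n\|_{L^2(\mathbb{R})}\le C\|u_n\|_{\mathcal{H}_i}$, i.e.\ continuity of $\mathcal{H}_i\hookrightarrow H^{1/2}(\mathbb{R})$, after which your argument (strong $L^2$ convergence on balls, $\varepsilon$-uniform tails from $V_i\ge 1/\varepsilon$ outside $B_{R_\varepsilon}$, Fatou for the limit, then interpolation) goes through; the restriction-to-$B_R$ point you worry about is harmless since the Gagliardo seminorm over $B_R\times B_R$ is dominated by the global one. One cosmetic slip: the interpolation relation should read $\frac1q=\frac{\theta}{2}+\frac{1-\theta}{r}$, not $q=2\theta+r(1-\theta)$; the conclusion $\|u_n-u\|_q\le\|u_n-u\|_2^{\theta}\|u_n-u\|_r^{1-\theta}\to0$ is unaffected.
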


\begin{lemma}\label{lemma limit}
	If $(u_{1n},u_{2n})\to(u_{1},u_{2})$ strongly in  $L^{q}(\mathbb{R})\times L^{q}(\mathbb{R})$ \  for  $q\in[2,\infty)$, then
	\begin{align*}
		\lim_{n\to\infty}\int_{\mathbb{R}}|u_{1n}|^{2}|u_{2n}|^{2}dx=\int_{\mathbb{R}}|u_{1}|^{2}|u_{2}|^{2}dx,\\
		\lim_{n \to \infty} \int_{\mathbb{R}}|u_{in}|^{4}dx= \int_{\mathbb{R}}|u_{i}|^{4}dx,\ \ i=1.2.
	\end{align*}
	\begin{proof} Since $(u_{1n},u_{2n})$ is bounded  in $L^{q}(\mathbb{R}) \times L^{q}(\mathbb{R})$ for  $q\in[2,\infty)$, by the H\"older inequality, we have
		\begin{align}
			&\Big|\int_{\mathbb{R}}|u_{1n}|^2|u_{2n}|^2dx-\int_{\mathbb{R}}|u_1|^2|u_2|^2dx\Big|\nonumber\notag\\   \le&\Big|\int_{\mathbb{R}}|u_{1n}|^2(|u_{2n}|^2-|u_2|^2)dx\Big|+\Big|\int_{\mathbb{R}}(|u_{1n}|^2-|u_1|^2)|u_2|^2dx\Big|\nonumber\notag\\   \leq &\Big(\!\int_{\mathbb{R}}|u_{1n}|^4dx\!\Big)^{\frac{1}{2}}\Big(\!\int_{\mathbb{R}}(|u_{2n}|^2\!- \! |u_2|^2)^2dx\!\Big)^{\frac{1}{2}}\!+ \!\Big(\!\int_{\mathbb{R}}(|u_{1n}|^2\! - \!|u_1|^2)^2dx\!\Big)^{\frac{1}{2}}\Big(\int_{\mathbb{R}}|u_{2}|^4dx\Big)^{\frac{1}{2}}\notag\\   \leq  &\Big(\int_{\mathbb{R}}|u_{1n}|^4dx\Big)^{\frac{1}{2}}\Big(\int_{\mathbb{R}}(|u_{2n}|-|u_2|)^4dx\Big)^{\frac{1}{4}}\Big(\int_{\mathbb{R}}(|u_{2n}|+|u_2|)^4dx\Big)^{\frac{1}{4}}\label{eq 2.1.}\\ &+\Big(\int_{\mathbb{R}}(|u_{1n}|-|u_1|)^4dx\Big)^{\frac{1}{4}}\Big(\int_{\mathbb{R}}(|u_{1n}|+|u_1|)^4dx\Big)^{\frac{1}{4}}\Big(\!\int_{\mathbb{R}}|u_{2}|^4dx\!\Big)^{\frac{1}{2}}\notag\\ \le &C_{10}||u_{2n}-u_2||_{4}+C_{20}||u_{1n}-u_1||_{4}.\notag
		\end{align}

		The assertion follows by letting $n\to \infty$ in (\ref{eq 2.1.}). Similarly, the remaining proof can be obtained.
	\end{proof}
\end{lemma}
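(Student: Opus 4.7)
The plan is to exploit strong convergence in $L^4(\mathbb{R})$ (which is a special case of the hypothesis, since $4\in[2,\infty)$) together with a routine telescoping-plus-H\"older argument. In particular, I will use the fact that $\{u_{in}\}$ is uniformly bounded in $L^4$ as a consequence of its strong convergence there, so every appearance of $\|u_{in}\|_{L^4}$ can be treated as a harmless constant.

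For the cross term, I would start from the algebraic identity
\[
|u_{1n}|^2|u_{2n}|^2 - |u_1|^2|u_2|^2 = |u_{1n}|^2\bigl(|u_{2n}|^2 - |u_2|^2\bigr) + \bigl(|u_{1n}|^2 - |u_1|^2\bigr)|u_2|^2
\]
and apply Cauchy--Schwarz to each summand, reducing matters to controlling $\bigl\||u_{in}|^2-|u_i|^2\bigr\|_{L^2}$ for $i=1,2$. I would then factor
\[
|u_{in}|^2-|u_i|^2=(|u_{in}|-|u_i|)(|u_{in}|+|u_i|)
\]
and apply H\"older with exponents $(2,2)$ in $L^2$, obtaining
$\bigl\||u_{in}|-|u_i|\bigr\|_{L^4}\cdot\bigl\||u_{in}|+|u_i|\bigr\|_{L^4}$.
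The second factor is uniformly bounded, while the first tends to zero by the pointwise estimate $\bigl||u_{in}|-|u_i|\bigr|\le|u_{in}-u_i|$ together with the assumed strong convergence $\|u_{in}-u_i\|_{L^4}\to 0$.

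For the second limit, the same telescoping trick works through $|u_{in}|^4-|u_i|^4=(|u_{in}|^2-|u_i|^2)(|u_{in}|^2+|u_i|^2)$, followed by one more H\"older step, and then the previous bound on $\bigl\||u_{in}|^2-|u_i|^2\bigr\|_{L^2}$ finishes the job. I do not anticipate any real obstacle; this is a standard continuity-of-polynomial-nonlinearities estimate on bounded subsets of $L^4$. The only point that requires care is to keep the exponents straight when iterating H\"older, so that every norm appearing in the final bound is either a uniformly bounded $L^4$-norm or the $L^4$-norm of $u_{in}-u_i$, which vanishes in the limit.
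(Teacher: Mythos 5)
Your proposal is correct and follows essentially the same route as the paper: the same telescoping decomposition of the cross term, the same factorization $|u_{in}|^2-|u_i|^2=(|u_{in}|-|u_i|)(|u_{in}|+|u_i|)$ with H\"older, and the reduction to $\|u_{in}-u_i\|_{L^4}\to 0$ via $\bigl||u_{in}|-|u_i|\bigr|\le|u_{in}-u_i|$, with the $|u_{in}|^4$ terms handled by the analogous argument. No issues.
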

Then we introduce the following auxiliary minimization problem, for $a_{1}, a_{2}>0$ and $\beta\in \mathbb{{R}}$,
\begin{equation}\label{problem2.1}
	\Gamma(a_{1},a_{2},\beta )  := \inf \left \{ J_{a_{1},a_{2},\beta}(u_{1},u_{2}):u_{i}\in H^{\frac{1}{2}}(\mathbb{R} ),\left \| u_{i} \right \|^{2}_{2}=1, i=1,2  \right \},
\end{equation}
where
\begin{equation}\label{eq2.2}
\begin{split}
		J_{a_{1},a_{2},\beta}(u_{1},u_{2})&=\frac{2|| (-\Delta  )^{\frac{1}{4} }u_{1} ||^{2}_{2}+2\ || (-\Delta  )^{\frac{1}{4} }u_{2} ||^{2}_{2} }{a_{1}\int_{\mathbb{R}} \left | u_{1} \right |^{4}dx+a_{2}\int_{\mathbb{R}} \left | u_{2} \right |^{4}dx+2\beta^{+}\int_{\mathbb{R}} \left | u_{1} \right |^{2}\left | u_{2} \right |^{2}dx}, \\
	&\quad \quad\quad \quad\quad \quad\quad \quad\beta ^{+}=\max \left \{ 0,\beta  \right \}.
\end{split}
\end{equation}\\
Next, we present the following properties of $	\Gamma(a_{1},a_{2},\beta )  $. In this paper we set $\mathbb{R}_+=(0,+\infty)$.

\begin{lemma}\label{lemma2.2}
	Let $\Gamma(a_{1},a_{2},\beta )  $ be defined by (\ref{problem2.1}), then  $\Gamma(a_{1},a_{2},\beta )  $  is locally Lipschitz continuous with respect to $(a_{1},a_{2},\beta ) \in \mathbb{{R}}^{3}_{+}\setminus \left \{ (0,0,0)\right \}$.
\end{lemma}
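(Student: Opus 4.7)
My plan is to transfer the Lipschitz question to the reciprocal $1/\Gamma$, which has the advantage of being expressible as a supremum of ratios that are jointly linear in $(a_1,a_2,\beta^+)$. Writing
$$\frac{1}{\Gamma(a_1,a_2,\beta)} = \sup_{\|u_i\|_{2}=1} \frac{a_1\int_{\mathbb{R}}|u_1|^4\,dx + a_2\int_{\mathbb{R}}|u_2|^4\,dx + 2\beta^+\int_{\mathbb{R}}|u_1|^2|u_2|^2\,dx}{2\|(-\Delta)^{\frac{1}{4}}u_1\|_{2}^2 + 2\|(-\Delta)^{\frac{1}{4}}u_2\|_{2}^2},$$
the elementary inequality $|\sup_u f - \sup_u g| \leq \sup_u |f-g|$ reduces the problem to showing that the ratio inside the supremum is Lipschitz in $(a_1,a_2,\beta)$ with a constant independent of $(u_1,u_2)$.

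To produce such a uniform pointwise bound, I would combine three ingredients: the $1$-Lipschitz estimate $|\beta^+ - (\beta')^+| \leq |\beta - \beta'|$, the Young inequality $\int_{\mathbb{R}}|u_1|^2|u_2|^2\,dx \leq \tfrac{1}{2}\int_{\mathbb{R}}(|u_1|^4 + |u_2|^4)\,dx$, and the fractional Gagliardo--Nirenberg inequality (\ref{eq1.10}), which in the normalized setting $\|u_i\|_{2}=1$ gives $\int_{\mathbb{R}}|u_i|^4\,dx \leq \frac{2}{a^*}\|(-\Delta)^{\frac{1}{4}}u_i\|_{2}^2$. These together dominate the numerator difference of the ratio by a constant multiple of $\|p-p'\|$ times the denominator, where $p=(a_1,a_2,\beta)$, producing a \emph{global} bound of the form $|1/\Gamma(p) - 1/\Gamma(p')| \leq (C/a^*)\|p-p'\|$.

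To convert this into a local Lipschitz estimate for $\Gamma$ itself, I would use the algebraic identity
$$\Gamma(p) - \Gamma(p') = \Gamma(p)\Gamma(p')\bigl(1/\Gamma(p') - 1/\Gamma(p)\bigr),$$
which requires $\Gamma$ to be bounded on compact subsets of $\mathbb{R}^3_+ \setminus\{(0,0,0)\}$. Taking the test pair $u_1=u_2=v_\lambda$ with $v_\lambda(x)=(\lambda/a^*)^{1/2}Q(\lambda x)$, which is $L^2$-normalized by the definition of $a^*$, and computing $\|(-\Delta)^{\frac{1}{4}}v_\lambda\|_{2}^2$ and $\int_{\mathbb{R}}v_\lambda^4\,dx$ by rescaling and invoking the Pohozaev identity (\ref{eq1.11}), I would get the explicit bound $\Gamma(a_1,a_2,\beta) \leq 2a^*/(a_1+a_2+2\beta^+)$. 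Since this remains finite on any compact set $K$ bounded away from $(0,0,0)$, the displayed identity immediately furnishes a Lipschitz constant of order $(\sup_K \Gamma)^2/a^*$ on $K$.

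The main technical point to beware of is essentially cosmetic: $\beta\mapsto\beta^+$ is only Lipschitz (not smooth), but its $1$-Lipschitz character is all the argument needs. The only genuine degeneracy is that the universal upper bound for $\Gamma$ blows up as $(a_1, a_2, \beta^+) \to (0,0,0)$, which is precisely why the origin must be excluded from the parameter domain in the statement.
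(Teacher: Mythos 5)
Your proposal is correct and follows essentially the same route as the paper: both establish a global Lipschitz bound of order $1/a^*$ for the reciprocal $1/\Gamma$ using the Gagliardo--Nirenberg inequality (\ref{eq1.10}) together with Young/H\"older for the cross term, and then pass back to $\Gamma$ via the identity $\Gamma(p)-\Gamma(p')=\Gamma(p)\Gamma(p')\bigl(1/\Gamma(p')-1/\Gamma(p)\bigr)$ combined with the upper bound $\Gamma\le 2a^*/(a_1+a_2+2\beta)$ coming from the $Q$-based test pair. The only cosmetic difference is that you bound $|1/\Gamma(p)-1/\Gamma(p')|$ by a uniform pointwise estimate under the supremum, whereas the paper runs the same comparison along scaling-normalized minimizing sequences (its (\ref{eq2.6})--(\ref{eq eq})); the estimates involved are identical.
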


\begin{proof}
	We first show that
	\begin{equation}\label{eq2.4}
		\frac{a^{*}}{\max \left \{ a_{1}+\beta, a_{2}+\beta \right \} }\le \Gamma(a_{1},a_{2},\beta )   \le \frac{2a^{*}}{a_{1}+a_{2}+2\beta}.
	\end{equation}
	For $Q(x)$ being given by (\ref{eq1.11}), we take $(u_{1},u_{2})=\Big(\frac{Q(x)}{||Q||_{2}},\frac{Q(x)}{||Q||_{2}}\Big)$ as a trial function of $\Gamma(a_{1},a_{2},\beta )$,  then the upper bound of (\ref{eq2.4}) can be derived from (\ref{eq1.11}) and (\ref{problem2.1}).
	In addition, for any $(u_{1},u_{2})\in H^{\frac{1}{2}}(\mathbb{{R}})\times H^{\frac{1}{2}}(\mathbb{{R})}$  satisfying $\int_{\mathbb{R}} \left | u_{i} \right |^{2}dx=1$ for $i=1, 2$, by applying  the  Gagliardo–Nirenberg inequality (\ref{eq1.10}), we have
	\begin{align*}
		J_{a_{1},a_{2},\beta}(u_{1},u_{2})&=\frac{2|| (-\Delta  )^{\frac{1}{4} }u_{1} ||^{2}_{2}+2\ || (-\Delta  )^{\frac{1}{4} }u_{2} ||^{2}_{2} }{a_{1}\int_{\mathbb{R}} \left | u_{1} \right |^{4}dx+a_{2}\int_{\mathbb{R}} \left | u_{2} \right |^{4}dx+2\beta\int_{\mathbb{R}} \left | u_{1} \right |^{2}\left | u_{2} \right |^{2}dx} \\
		&\ge \frac{a^{*}\Big(\int_{\mathbb{R}} \left | u_{1} \right |^{4}dx+\int_{\mathbb{R}} \left | u_{2} \right |^{4}dx\Big)}{\big(a_{1}+\beta \big)\int_{\mathbb{R}} \left | u_{1} \right |^{4}dx+\big(a_{2}+\beta \big)\int_{\mathbb{R}} \left | u_{2} \right |^{4}dx} \\
		&\ge \frac{a^{*}}{\max \left \{ a_{1}+\beta, a_{2}+\beta \right \} },
	\end{align*}
	which implies that the lower bound of (\ref{eq2.4}) is proved. This completes the proof of (\ref{eq2.4}).
	
	Let $\left \{ (u_{1n},u_{2n}) \right \} \subset H^{\frac{1}{2}}(\mathbb{{R}})\times H^{\frac{1}{2}}(\mathbb{{R})} $ be a minimizing sequence of $\Gamma(a_{1},a_{2},\beta )  $. Since (\ref{eq2.2}) is invariant under the rescaling: $u(x)\longmapsto \lambda^{\frac{1}{2} } u(\lambda x),  \lambda>0$, one may  suppose that
	\begin{equation}\label{eq2.6}
		|| (-\Delta  )^{\frac{1}{4} }u_{1n} ||^{2}_{2}+\ || (-\Delta  )^{\frac{1}{4} }u_{2n} ||^{2}_{2}=1 \ \ \text{for all} \ \ n\in \mathbb{N}.
	\end{equation}
	It then follows from the Gagliardo-Nirenberg inequality (\ref{eq1.10}) that
	\begin{equation*}
		\int_{\mathbb{R}} \left | u_{in} \right |^{4}dx\le \frac{2}{a^{*}} || (-\Delta  )^{\frac{1}{4} }u_{in} ||^{2}_{2}\le\frac{2}{a^{*}}, \ \ i=1,2,
	\end{equation*}
	and
	\begin{align*}
		\int_{\mathbb{R}} \left | u_{1n} \right |^{2}\left | u_{2n} \right |^{2}dx&\le \frac{1}{2} \Big( \int_{\mathbb{R}} \left | u_{1n} \right |^{4}dx+\int_{\mathbb{R}} \left | u_{2n} \right |^{4}dx \Big)  \\
		&\le \frac{1}{a^{*}}\big(|| (-\Delta  )^{\frac{1}{4} }u_{1n} ||^{2}_{2}+\ || (-\Delta  )^{\frac{1}{4} }u_{2n} ||^{2}_{2}\big)=\frac{1}{a^{*}} .
	\end{align*}
	Since $\left \{ (u_{1n},u_{2n}) \right \}$ is a minimizing sequence of $\Gamma(a_{1},a_{2},\beta ) $, by (\ref{eq2.6}), we infer that, for any $(a_{1},a_{2},\beta), (\hat{a}_{1},\hat{a}_{2},\hat{\beta}) \in \mathbb{{R}}^{3}_{+}\setminus \left \{ (0,0,0)\right \}$,
	\begin{align*}
		\frac{1}{\Gamma(a_{1},a_{2},\beta)}
		& =\lim_{n\to\infty}\Bigg[\frac{\hat{a}_{1} \int_{\mathbb{R}}  | u_{1n}  |^{4}dx+\hat{a}_{2} \int_{\mathbb{R}} \left | u_{2n} \right |^{4}dx+2\hat{\beta } \int_{\mathbb{R}} \left | u_{1n} \right |^{2}\left | u_{2n} \right |^{2}dx}{2\|(-\Delta)^{\frac{1}{4}}u_{1n}\|_{2}^{2}+2\|(-\Delta)^{\frac{1}{4}}u_{2n}\|_{2}^{2}} \\
		& \quad +\sum_{i=1}^2\frac{a_i-\hat{a}_i}2\int_{\mathbb{R}} \left | u_{in} \right |^{4}dx+(\beta-\hat{\beta})\int_{\mathbb{R}} \left | u_{1n} \right |^{2}\left | u_{2n} \right |^{2}dx \Bigg] \\
		&\leq\lim_{n\to\infty}\Bigg[\frac{1}{\Gamma(\hat{a}_{1},\hat{a}_{2},\hat{\beta})}+\sum_{i=1}^{2}\frac{|a_{i}-\hat{a}_{i}|}{2}\int_{\mathbb{R}} \left | u_{in} \right |^{4}dx\\
		&\quad+|\beta-\hat{\beta}|\int_{\mathbb{R}} \left | u_{1n} \right |^{2}\left | u_{2n} \right |^{2}dx\Bigg]\\
		&\leq\frac{1}{\Gamma(\hat{a}_{1},\hat{a}_{2},\hat{\beta})}+\sum_{i=1}^{2}\frac{|a_{i}-\hat{a}_{i}|}{a^{*}}+\frac{|\beta-\hat{\beta}|}{a^{*}},
	\end{align*}
	so there holds
	\begin{align}\label{eq2.7}
		\frac{1}{\Gamma(a_{1},a_{2},\beta)}-\frac{1}{\Gamma(\hat{a}_{1},\hat{a}_{2},\hat{\beta})}\leq\frac{3}{a^{*}}\big|(a_{1},a_{2},\beta)-(\hat{a}_{1},\hat{a}_{2},\hat{\beta})\big|.
	\end{align}
	Taking $\{(\tilde{u}_{1n},\tilde{u}_{2n})\}$ as  a minimizing sequence of $\Gamma(\hat{a}_{1},\hat{a}_{2},\hat{\beta})$, similar to the argument presented above, we further obtain that
	\begin{align}\label{eq2.8}
		\frac1{\Gamma(\hat{a}_1,\hat{a}_2,\hat{\beta})}-\frac1{\Gamma(a_1,a_2,\beta)}\leq\frac3{a^*}\big|(a_1,a_2,\beta)-(\hat{a}_1,\hat{a}_2,\hat{\beta})\big|,
	\end{align}
	which together with (\ref{eq2.7}) yields that
	\begin{equation}\label{eq eq}
\begin{split}
		\Big|\frac{\Gamma(a_1,a_2,\beta)-\Gamma(\hat{a}_1,\hat{a}_2,\hat{\beta})}{\Gamma(a_1,a_2,\beta)\Gamma(\hat{a}_1,\hat{a}_2,\hat{\beta})}\Big|&=\Big|\frac{1}{\Gamma(a_1,a_2,\beta)}-\frac{1}{\Gamma(\hat{a}_1,\hat{a}_2,\hat{\beta})}\Big|\\ 
	&\leq\frac{3}{a^*}\Big|(a_1,a_2,\beta)-(\hat{a}_1,\hat{a}_2,\hat{\beta})\Big|.
\end{split}
	\end{equation}
	By (\ref{eq2.4}) and  (\ref{eq eq}), a direct calculation gives that
	\begin{align}\label{ab ineq}
		\Big|\Gamma(a_{1},a_{2},\beta)\! -  \! \Gamma(\hat{a}_{1},\hat{a}_{2},\hat{\beta})\Big|\leq\frac{12a^{*}\Big|(a_{1},a_{2},\beta)\!- \!(\hat{a}_{1},\hat{a}_{2},\hat{\beta})\Big|}{(a_{1}+a_{2}+2\beta)(\hat{a}_{1}+\hat{a}_{2}+2\hat{\beta})},
	\end{align}
	which shows that $\Gamma(\cdot)$ is locally Lipschitz continuous in $\mathbb{R} _{+} ^{3} \setminus \{ ( 0, 0, 0) \}$.\\

	\noindent \textbf{Remark 2.1}.	It is noteworthy  that Lemma \ref{lemma2.2}  also holds for $\left(a_{1},a_{2}\right)\in\mathbb{R}_{+}^{2}\setminus\left\{\left(0,0\right)\right\}$ and $\beta\in \mathbb{R}$,    since $\Gamma(a_1,a_2,\beta)\equiv\Gamma(a_1,a_2,0)$  for $\beta\leq 0$.\\
	
	We recall that in this paper, $a^*=\|Q\|_2^2$ and the numbers $\beta_*$ and $\beta^*$ are defined in (\ref{beta}).
	\begin{lemma}\label{lemma2.3}
		Assume that $0<a_1\neq a_2 <  a^* $, then\\
		\quad $\textnormal{(i)}$ $\Gamma ( a_{1}, a_{2}, \beta _{* }) > 1$ $if$ $| a_{1}- a_{2}| \leq 2\beta _{* }$;\\
		\quad $ \textnormal{(ii)}$ $\Gamma \left ( a_{1}, a_{2}, \beta ^{* }\right ) \leq 1$.
		
	\end{lemma}
	\begin{proof}
		\renewcommand{\qedsymbol}{}
		(i) Let $\left \{ ( u_{1n}, u_{2n}) \right \} \subset \mathcal{M}$ be a minimizing sequence of $\Gamma(a_1,a_2,\beta_*)$. According to  the  property of Schwartz symmetrization shown in Theorem 3.7 in \cite{x19}, one may assume that $\left \{ ( u_{1n}, u_{2n}) \right \} \subset H^{\frac{1}{2}}_r(\mathbb{{R}})\times H^{\frac{1}{2}}_r(\mathbb{{R}})  $, i.e.,
		\begin{equation*}
			u_{in}(x)=u_{in}(|x|),\ \ i=1,2,.
		\end{equation*}
		Since (\ref{eq2.2}) is invariant under the rescaling: $u(x)\longmapsto \lambda^{\frac{1}{2} } u(\lambda x),  \lambda>0$, one  can   suppose that
		\begin{equation}\label{eq2.9}
			\|(-\Delta)^{\frac{1}{4}}u_{1n}\|_2^2+\|(-\Delta)^{\frac{1}{4}}u_{2n}\|_2^2=1\ \ \text{ for all }n\in\mathbb{N}^+.
		\end{equation}
		Combining (\ref{eq2.4}) and (\ref{eq2.9})  with  the  Gagliardo-Nirenberg inequality (\ref{eq1.10}), we can conclude that $\int_{\mathbb{R}} \left | u_{in} \right |^{4}dx \ (i=1,2)$ is bounded uniformly, that is,
		\begin{equation}\label{eq2.10}
			0<\hat{C}_1\leq a_1\int_{\mathbb{R}} \left | u_{1n} \right |^{4}dx+a_2\int_{\mathbb{R}} \left | u_{2n} \right |^{4}dx+2\beta\int_{\mathbb{R}} \left | u_{1n} \right |^{2}\left | u_{2n} \right |^{2}dx\leq\hat{C}_2<\infty.
		\end{equation}
		
		\noindent We  now  prove (i)  by analyzing the  following two cases: \\
		
		\noindent \textbf{Case 1.} If
		\begin{equation*}
			\int_{\mathbb{R}} \left | u_{1n} \right |^{4}dx\to 0,\ \text{or} \int_{\mathbb{R}} \left | u_{2n} \right |^{4}dx\to 0 \ \ \text{as} \ \ n\to \infty,
		\end{equation*}
		without loss of generality, we suppose that
		\begin{equation*}
			\int_{\mathbb{R}} \left | u_{1n} \right |^{4}dx\to 0 \ \ \text{as}\ \ n\to \infty .
		\end{equation*}
		It then follows from (\ref{eq2.10}) and the H\"older inequality that
		\begin{equation*}
			\int_{\mathbb{R}}|u_{1n}|^{2}|u_{2n}|^{2}dx\leq\Big(\!\int_{\mathbb{R}}|u_{1n}|^{4}dx\!\Big)^{\frac{1}{2}}\Big(\!\int_{\mathbb{R}}|u_{2n}|^{2}dx\!\Big)^{\frac{1}{2}} \overset{n}{\to} 0\  \text{and} \  \int_{\mathbb{R}} \left | u_{2n} \right |^{4}dx\geq C>0.
		\end{equation*}
		Since $0<a_2<a^*$, by the  Gagliardo-Nirenberg inequality (\ref{eq1.10}), we  have
		\begin{equation}
\begin{split}
				\Gamma(a_{1},a_{2},\beta_{*})&=\lim_{n\to\infty}\frac{	2|| (-\Delta  )^{\frac{1}{4} }u_{1n} ||^{2}_{2}+\ 2|| (-\Delta  )^{\frac{1}{4} }u_{2n}  ||^{2}_{2}}{a_{2}\int_{\mathbb{R}}|u_{2n}|^{4}dx+o(1)}\\
	&\geq\lim_{n\to\infty}\frac{a^{*}\int_{\mathbb{R}}|u_{2n}|^{4}dx}{a_{2}\int_{\mathbb{R}}|u_{2n}|^{4}dx+o(1)}\geq\frac{a^{*}}{a_{2}}>1.
\end{split}
		\end{equation}
		\noindent Thus, (i) is proved.\\
		
		\noindent \textbf{Case 2.} If
		\begin{equation*}
			\int_{\mathbb{R}} \left | u_{1n} \right |^{4}dx\geq C>0\ \ \text{and}\ \ \int_{\mathbb{R}} \left | u_{2n} \right |^{4}dx\geq C>0,
		\end{equation*}
		then it follows from (\ref{eq1.10}), (\ref{eq2.9}) and (\ref{eq2.10}) that  there are positive constants $C_{3}$ and $C_{4}$, independent of $n$, such that
		\begin{equation}\label{eq2.12}
			C_3\leq\|(-\Delta)^{\frac14}u_{1n}\|_2^2,\  \|(-\Delta)^{\frac14}u_{2n}\|_2^2,\  \int_{\mathbb{R}}|u_{1n}|^{4}dx,\  \int_{\mathbb{R}}|u_{2n}|^{4}dx\leq{C}_4.
		\end{equation}
		\noindent Since $\left \{ u_{in} \right \} \subset H^{\frac{1}{2}}_r(\mathbb{{R}})$  $(i=1, 2)$, it can be inferred from  (\ref{eq2.12}) and Lemma \ref{lemma2.1} that  there exists  $u_{i}(x)\subset H^{\frac{1}{2}}_r(\mathbb{{R}})$ such that, for $i=1,2$,
		\begin{equation}\label{eq2.13}
			u_{in}\overset{n}{\rightharpoonup }u_i \text{ weakly in }H^{\frac12}_r(\mathbb{R}),\  u_{in}\overset{n}{\to}u_i\text{ strongly in }L^p(\mathbb{R}), \ \ p\in(2,\infty).
		\end{equation}
		Then, it follows from  Lemma \ref{lemma limit} that
		\begin{equation*}
			\lim_{n \to \infty} \int_{\mathbb{R}}|u_{in}|^{4}dx= \int_{\mathbb{R}}|u_{i}|^{4}dx,\ \ i=1.2.
		\end{equation*}
		By (\ref{eq2.12}), we have
		\begin{equation}\label{eq2.14}
			\int_{\mathbb{R}}|u_{i}|^{4}dx\geq C>0\ \ \mathrm{and}\ \ u_{i}\not\equiv 0,\ \ i=1,2.
		\end{equation}
		
		Given that  $0<a_1\neq a_2<a^*$, without loss of generality, we may suppose that $a_1<a_2$, which, together with  the condition $|a_1-a_2|\leq2\beta_*=2\sqrt{(a^*-a_1)(a^*-a_2)}$ gives that
		\begin{equation}\label{eq2.15}
			0<a_1<a_2<a^* \mathrm{~~and~~}a_2\leq2\beta_*+a_1,\ \beta_*=\sqrt{(a^*-a_1)(a^*-a_2)}>0.
		\end{equation}
		\noindent We  deduce  from the Gagliardo-Nirenberg inequality (\ref{eq1.10}), (\ref{eq2.13}) and (\ref{eq2.14})   that
		\begin{align}
			\Gamma(a_{1},a_{2},\beta_{*})& =\lim_{n\rightarrow\infty}J_{a_{1},a_{2},\beta_{*}}(u_{1n},u_{2n}) \notag\\
			&\geq\lim_{n\to\infty}\frac{a^{*}\big(\int_{\mathbb{R}}|u_{1n}|^{4}dx+\int_{\mathbb{R}}|u_{2n}|^{4}dx\big)}{a_{1}\int_{\mathbb{R}}|u_{1n}|^{4}dx+a_{2}\int_{\mathbb{R}}|u_{2n}|^{4}dx+2\beta_{*}\int_{R}|u_{1n}|^{2}|u_{2n}|^{2}dx} \notag\\
			&=\frac{a^{*}\big(\int_{\mathbb{R}}|u_{1}|^{4}dx+\int_{\mathbb{R}}|u_{2}|^{4}dx\big)}{a_{1}\int_{\mathbb{R}}|u_{1}|^{4}dx+a_{2}\int_{\mathbb{R}}|u_{2}|^{4}dx+2\beta_{*}\int_{\mathbb{R}}|u_{1}|^{2}|u_{2}|^{2}dx} \notag\\
			&\label{eq2.16}\geq\frac{a^{*}\big(\int_{\mathbb{R}}|u_{1}|^{4}dx+\int_{\mathbb{R}}|u_{2}|^{4}dx\big)}{a_{1}\int_{\mathbb{R}}|u_{1}|^{4}dx+a_{2}\int_{\mathbb{R}}|u_{2}|^{4}dx+2\beta_{*} \big(\int_{\mathbb{R}}|u_{1}|^{4}dx\int_{\mathbb{R}}|u_{2}|^{4}dx\big)^{\frac{1}{2}}} \\
			&=\kappa _{a_{1}, a_{2}, \beta_{*}}(\omega_{0})\ \ \text{with} \ \ \omega_{0}:=\Big(\frac{\int_{\mathbb{R}}|u_{2}|^{4}dx}{\int_{\mathbb{R}}|u_{1}|^{4}dx}\Big)^{\frac{1}{2}}>0\notag \\
			&\label{eq2.17}\geq\mathop{\mathrm{inf\mathrm{} } }\limits _{t\in(0,\infty) }\kappa _{a_{1}, a_{2}, \beta_{*}}(t) ,
		\end{align}
		\noindent where
		\begin{equation}\label{eq2.18}
			\kappa_{a_1, a_2, \beta_*}(t):=\frac{a^*(1+t^2)}{a_1+a_2t^2+2\beta_*t}.
		\end{equation}
		
		\noindent We know from the H\"{o}lder  inequality that the inequality (\ref{eq2.16}) becomes equality
		 if and only if
		\begin{equation}\label{eq2.19}
			u_2^2(x)=l u_1^2(x)\ \ \text{for some}\ \ l>0.
		\end{equation}
		\noindent  From the definition (\ref{eq2.18}), it is easy to verify that
		\begin{equation}\label{eq2.20}
			\kappa_{a_1, a_2,  \beta_{*}}(t)\geq1,\ \ \forall\:t\in(0,+\infty),
		\end{equation}
		and
		\begin{equation}
			\kappa_{a_1, a_2, \beta_*}(t)=1\:\Leftrightarrow \:t=t_0:=\sqrt{\frac{a^*-a_1}{a^*-a_2}} .
		\end{equation}
		
		\noindent  Combining (\ref{eq2.17}) and (\ref{eq2.20}) gives that
		\begin{equation*}
			\Gamma\big(a_1,a_2,\beta_*\big)\geq1,
		\end{equation*}
		which means that proving (i) requires us only to exclude the possibility that 
		
		\noindent $\Gamma\big(a_1,a_2,\beta_*\big)=1$. In fact, if
		\begin{equation}\label{eq2.22}
			\Gamma\big(a_1,a_2,\beta_*\big)=1,
		\end{equation}
		then (\ref{eq2.19}) holds, i.e.,
		\begin{equation}\label{eq2.23}
			u_2^2(x)=lu_1^2(x), \ \ \text{and}  \ \ l=t_{0}=\sqrt{\frac{a^*-a_1}{a^*-a_2}}>1.
		\end{equation}
		This and (\ref{eq2.13})  imply that
		\begin{align}\label{eq2.24}
			\Gamma(a_1,a_2,\beta_*)\notag& =\lim_{n\to\infty}J_{a_{1},a_{2},\beta_{*}}\big(u_{1n},u_{2n}\big)\notag \\
			&\geq\frac{2\|(-\Delta)^{\frac14}u_1\|_2^2+2\|(-\Delta)^{\frac14}u_2\|_2^2}{a_{1}\int_{\mathbb{R}}|u_{1}|^{4}dx+a_{2}\int_{\mathbb{R}}|u_{2}|^{4}dx+2\beta_{*}\int_{\mathbb{R}}|u_{1}|^{2}|u_{2}|^{2}dx} \\
			&=\frac{2(1+l)}{a_1+a_2l^2+2\beta_*l}\cdot\frac{\|(-\Delta)^{\frac14}u_1\|_2^2}{\int_{\mathbb{R}}|u_{1}|^{4}dx}. \notag
		\end{align}
		On the other hand, let
		\begin{equation}\label{eq2.25}
			\tilde{u}_i=\frac{1}{\sqrt{\lambda_i}}u_i\ \ \text{ with }\ \ \lambda_i:=\int_{\mathbb{R}}\left|u_i\right|^2dx\leq\lim_{n\to\infty}\int_{\mathbb{R}}\left|u_{in}\right|^2dx=1.
		\end{equation}
		\noindent Then, we have
		\begin{equation*}
			\int_{\mathbb{R}}|\tilde{u}_i|^2dx=1,\ \  i=1,2.
		\end{equation*}
		By (\ref{eq2.23}), we further have
		\begin{equation}\label{eq2.26}
			\lambda_{2}=l\lambda_{1}\leq1.
		\end{equation}
		It then follows from (\ref{eq2.23}), (\ref{eq2.25}) and (\ref{eq2.26}) that
		\begin{align*}
			\Gamma(a_1,a_2,\beta_*) & \leq\frac{2\|(-\Delta)^{\frac{1}{4}}\tilde{u}_{1}\|_{2}^{2}+2\|(-\Delta)^{\frac{1}{4}}\tilde{u}_{2}\|_{2}^{2}}{a_{1}\int_{\mathbb{R}}|\tilde{u}_1|^{4}dx+a_{2}\int_{\mathbb{R}}|\tilde{u}_2|^{4}dx+2\beta_{*}\int_{\mathbb{R}}|\tilde{u}_1|^{2}|\tilde{u}_2|^{2}dx} \\
			&=\frac{4\lambda_1}{a_1+a_2+2\beta_*}\cdot\frac{\|(-\Delta)^{\frac14}u_1\|_2^2}{\int_{\mathbb{R}}|u_{1}|^{4}dx}\leq\frac{4/l}{a_1+a_2+2\beta_*}\cdot\frac{\|(-\Delta)^{\frac14}u_1\|_2^2}{\int_{\mathbb{R}}|u_{1}|^{4}dx}.
		\end{align*}
		Putting together this with (\ref{eq2.24}), we infer that
		\begin{equation*}
			\frac{2(1+l)}{a_1+a_2l^2+2\beta_*l}\leq\frac{4/l}{a_1+a_2+2\beta_*},
		\end{equation*}
		i.e.,
		\begin{equation}\label{eq2.27}
			\frac{2(l+l^2)}{a_1+a_2l^2+2\beta_*l}\leq\frac4{a_1+a_2+2\beta_*},\ \ \mathrm{where}\ \ l=\sqrt{\frac{a^*-a_1}{a^*-a_2}}>1.
		\end{equation}
		Next, we show  that  (\ref{eq2.27}) cannot be true, which implies that assumption  (\ref{eq2.22}) is false, thereby proving (i).
		
		Since $0<a_1<a_2<a^*$, it is easy to see that
		\begin{equation*}
			(a^*-a_1)-(a^*-a_2)\leq2\sqrt{(a^*-a_1)(a^*-a_2)}.
		\end{equation*}
		We further have
		\begin{equation*}
			l^2-1\le 2l,
		\end{equation*}
		which implies that   $1<l\leq1+\sqrt{2}$. Let
		\begin{equation*}
			z(t):=\frac{2(t+t^2)}{a_1+a_2t^2+2\beta_*t}.
		\end{equation*}
		It is easy to see  that $z(t)$ is strictly increasing function as $t\in [1, 1+\sqrt{2} ]$, because
		\begin{equation*}
			z'(t)=2\frac{(2\beta_*-a_2)t^2+2a_1t+a_1}{\left(a_1+a_2t^2+2\beta_*t\right)^2},
		\end{equation*}
		and by (\ref{eq2.15}), for $t\in[1,1+\sqrt{2})$,
		\begin{equation*}
			(2\beta_*-a_2)t^2+2a_1t+a_1\geq-a_1t^2+2a_1t+a_1=a_1[2-(t-1)^2]>0,
		\end{equation*}
		which shows that $z'(t)>0$. Therefore, $z(1)<z(l)$ by $l\in (1,1+\sqrt{2}]$, and then (\ref{eq2.27}) cannot be hold.
		
		(\textbf{ii}): It follows from  (\ref{eq2.4}) that,  for any $(a_{1},a_{2},\beta ) \in \mathbb{{R}}^{3}_{+}\setminus \left \{ (0,0,0)\right \}$,
		\begin{equation*}
			\Gamma(a_1,a_2,\beta)\leq\frac{2a^*}{a_1+a_2+2\beta}.
		\end{equation*}
		Since $0<a_1<a_2<a^*$, and taking $\beta=\beta^*=\frac{2a^*-a_1-a_2}2$ into the above inequality results in   $\Gamma(a_1,a_2,\beta^*)\leq1$, this completes the proof of (ii).
		
	\end{proof}

	Finally, in order to obtain the existence and non-existence criteria of minimizer, that is, to prove Theorem \ref{theorem 2.4}. We need to establish some estimates for $\psi_R$, i.e.,
	\begin{align}
		\label{eq2.48}\int_{\mathbb{R}}\psi_R\sqrt{-\Delta}\psi_R dx\leq\frac{R}{\|Q\|_{2}^{2}}\|(-\Delta)^{\frac{1}{4}}Q\|_{2}^{2}+O(R^{-\frac{3}{2}})=R+O(R^{-\frac{3}{2}}).
			\end{align}
\begin{equation}
	\begin{split}
				\frac{2R}{a^{*}}-O(R^{-6})& =\frac{R}{||Q||_{2}^{4}}\int_{\mathbb{R}}Q^{4}dx-O(R^{-6})\leq\int_{\mathbb{R}}|\psi_R|^{4}dx \\
		&\label{eq2.49}\leq\frac{R}{\|Q\|_{2}^{4}}\int_{\mathbb{R}}Q^{4}dx+O(R^{-2})=\frac{2R}{a^{*}}+O(R^{-2}).
	\end{split}
\end{equation}

	Here, $\psi_R$  is defined as
	\begin{equation}\label{eq2.33}
		\psi_R=A_R\frac{R^{\frac{1}{2}}}{\|Q\|_2}\varphi(x-x_0)Q\big(R(x-x_0)\big),
	\end{equation}
	where $x_0\in\mathbb{R}$, $R>1$, $Q$ is the unique  positive  radially symmetric solution of (\ref{classical eq}), $\varphi(x)\in C_0^\infty(\mathbb{R})$ is a non-negative function, which satisfies
	\begin{equation}\label{fai}
		\varphi(x)=1\ \ \text{for}\ \ |x|\leq\frac{1}{2}; \ \ \varphi(x)=0 \ \ \text{for} \ \ |x|>1; \ \ 0\leq\varphi\leq1,
	\end{equation}
	and $A_{R}>0$ is chosen such that $\int_{\mathbb{R}}\psi_R^2dx=1$, that is,
	\begin{equation}\label{eq 1}
		1=\int_{\mathbb{R}}\frac{A_R^2R}{\|Q\|_2^2}\varphi^2(x)Q^2(Rx) dx=\int_{\mathbb{R}}\frac{A_R^2}{\|Q\|_2^2}\varphi^2(R^{-1}x)Q^2(x) dx.
	\end{equation}
	
	We now  prove (\ref{eq2.48}) and (\ref{eq2.49}),   it follows from the decay rate of $Q$ given in (\ref{decay}) and (\ref{eq 1}) that, for $R$ large enough,
	\begin{align}
		\begin  {vmatrix}1-A_R^2\end{vmatrix}&=\left|\int_{\mathbb{R}}\frac{A_R^2}{\|Q\|_2^2}\Big(\varphi^2\big(\frac{x}{R}\big)-1\Big)Q^2(x) dx\right| \notag \\
	&\leq\int_{\mathbb{R}\setminus B_{\frac{1}{2}R}}\left|\frac{A_{R}^{2}}{\|Q\|_{2}^{2}}\Big(\varphi^{2}\big(\frac{x}{R}\big)-1\Big)Q^{2}(x)\right| dx \\
	&\leq C\int_{\mathbb{R}\setminus B_{\frac{1}{2}R}}|x|^{-4} dx\notag \\
	&\leq CR^{-3}.\notag
\end{align}
Then we  have
\begin{equation}\label{AR}
	\begin{aligned}&1\leq A_{R}^{2}=1-\int_{\mathbb{R}}\frac{A_{R}^{2}}{\|Q\|_{2}^{2}}\Big(\varphi^{2}\big(\frac{x}{R}\big)-1\Big)Q^{2}(x) dx\leq1+O(R^{-3})\end{aligned}	
\end{equation}
as $R\to\infty$.

Now, we focus on the translations and scaling of integrals involving the non-local operator $\sqrt{-\Delta }$ and show that
\begin{align}\label{eq2.37}
	&\frac{\|Q\|_{2}^{2}}{A_{R}^{2}}\int_{\mathbb{R}}\psi_R\sqrt{-\Delta}\psi_R dx=R\int_{\mathbb{R}}\varphi(R^{-1}x)Q(x)\sqrt{-\Delta}\big(\varphi(R^{-1}x)Q(x)\big) dx.\end{align}

We  first address the translation. By the definition of $\sqrt{-\Delta}$ given in (\ref{four}), we have
\begin{align}\label{eq2.38}
	&\frac{\|Q\|_2^2}{A_R^2}\int_{\mathbb{R}^3}\psi_R\sqrt{-\Delta}\psi_R dx \notag\\
	&=\int_{\mathbb{R}}R\varphi(x-x_{0})Q(R(x-x_{0}))\mathcal{F}^{-1}\Big(|\xi|\mathcal{F}\big(\varphi(y-x_{0})Q(R(y-x_{0}))\big)\Big) dx \notag\\
	&=\frac{1}{2\pi}\int_{\mathbb{R}}R\varphi(x)Q(Rx) dx\int_{\mathbb{R}}\mathrm{e}^{i(x+x_{0})\cdot\xi}|\xi| d\xi\int_{\mathbb{R}}\mathrm{e}^{-i\xi\cdot(y+x_{0})}\varphi(y)Q(Ry) dy \\
	&=\int_{\mathbb{R}}R\varphi(x)Q(Rx)\mathcal{F}^{-1}\big(|\xi|\mathcal{F}(\varphi(y)Q(Ry))\big) dx \notag\\
	&=\int_{\mathbb{R}}R\varphi(x)Q(Rx)\sqrt{-\Delta}\big(\varphi(x)Q(Rx)\big) dx. \notag
\end{align}\\
On the other hand, regarding the scaling of integrals, by applying the Plancherel theorem and (\ref{exch}), we  deduce that
\begin{align}\label{eq2.39}
	&\int_{\mathbb{R}}R\varphi(x)Q(Rx)\sqrt{-\Delta}\big(\varphi(x)Q(Rx)\big) dx \notag\\
	& =\int_{\mathbb{R}}R|\xi|\mathcal{F}\big(\varphi(y)Q(Ry)\big)^{2} d\xi \notag \\
	&= \begin{aligned}\frac{R}{2\pi}\int_{\mathbb{R}}|\xi|d\xi\Big(\int_{\mathbb{R}}\mathrm{e}^{-i\xi\cdot y}\varphi(y)Q(Ry) dy\Big)^{2}\end{aligned}\notag \\
	&= \frac{R}{2\pi}\int_{\mathbb{R}}|\xi|d\xi\Big(\int_{\mathbb{R}}\mathrm{e}^{-i\xi\cdot y}\varphi(R^{-1}y)Q(y) dy\Big)^{2} \\
	&=R\int_{\mathbb{R}}|\xi|\mathcal{F}\big(\varphi(R^{-1}y)Q(y)\big)^{2} d\xi  \notag\\
	&=R\int_{\mathbb{R}}\varphi(R^{-1}x)Q(x)\sqrt{-\Delta}\big(\varphi(R^{-1}x)Q(x)\big) dx. \notag
\end{align}
It then follows from the  (\ref{eq2.38}) and (\ref{eq2.39}) that   (\ref{eq2.37}) holds.
Equation (\ref{eq2.37}) also implies that
\begin{align}\label{eq2.40}
	&\frac{\|Q\|_2^2}{A_R^2}\int_{\mathbb{R}}\psi_R\sqrt{-\Delta}\psi_R dx \notag\\
	&=R\int_{\mathbb{R}}\varphi(R^{-1}x)Q(x)\sqrt{-\Delta}\big(\varphi(R^{-1}x)Q(x)\big) dx\notag \\
	&=R\bigg(\int_{\mathbb{R}}Q(x)\sqrt{-\Delta}Q(x) dx+\int_{\mathbb{R}}\big(\varphi(R^{-1}x)-1\big)Q(x)\sqrt{-\Delta}Q(x) dx \\
	&\quad\quad+\int_{\mathbb{R}}\varphi(R^{-1}x)Q(x)\sqrt{-\Delta}\Big(\big(\varphi(R^{-1}x)-1\big)Q(x)\Big) dx\bigg) \notag\\
	&=:R\int_{\mathbb{R}}Q(x)\sqrt{-\Delta}Q(x) dx+RA+RB, \notag
\end{align}
$A:=\int_{\mathbb{R}}\big(\varphi(R^{-1}x)-1\big)Q(x)\sqrt{-\Delta}Q(x) dx$, $B:=\int_{\mathbb{R}}\varphi(R^{-1}x)Q(x)\sqrt{-\Delta}\Big(\big(\varphi(R^{-1}x)-1\big)Q(x)\Big) dx$.

We now turn to estimate terms $A$ and $B$. Since $Q(x)$ is a positive  solution of (\ref{classical eq}), it is easy to see that
\begin{equation}\label{eq2.41}
	\big | \sqrt{-\Delta }Q(x)  \big |\ \leq  Q^3(x)  +Q(x).
\end{equation}
Combining the decay rate of $Q$  (\ref{decay}) and (\ref{eq2.41})  gives that
\begin{align}\label{eq2.42}
	\left|A\right|& \leq\int_{\mathbb{R}\setminus B_{\frac12R}}\left|\big(\varphi(R^{-1}x)-1\big)Q(x)\sqrt{-\Delta}Q(x)\right|dx \notag\\
	&\leq C\int_{\mathbb{R}\setminus B_{\frac12R}}|x|^{-8}+|x|^{-4}dx \\
	&\leq C(R^{-7}+R^{-3}). \notag
\end{align}

In order to estimate the term $B$, we first recall an estimate of commutators. Let $\varphi_R:=\varphi(R^{-1}x)$.  It is known from Remark 4 in \cite{NEW1} that
\begin{equation}\label{eq2.43}
	\|[\sqrt{-\Delta},\varphi_R]\|_{L^2(\mathbb{R})}\leq C\|\nabla\varphi_R\|_\infty.
\end{equation}
Here $[\sqrt{-\Delta},\varphi_R]$ denotes  the commutator of $\sqrt{-\Delta}$ and $\varphi_R$, and $[\cdot,\cdot]$ represents the Lie bracket. By (\ref{eq2.43}), we  derive that
\begin{equation}\label{eq2.44}
	\int_{\mathbb{R}}\Big([\sqrt{-\Delta},\varphi_R]Q\Big)^2dx\leq C\|\nabla\varphi_R\|_\infty^2\|Q\|_2^2 .
\end{equation}
Obviously,
\begin{align*}
	\left|B\right|& \leq\Big|\int_{\mathbb{R}}\big(\varphi(R^{-1}x)-1\big)Q(x)\varphi(R^{-1}x)\sqrt{-\Delta}Q(x) dx\Big| \\
	&\quad\quad+\Big|\int_{\mathbb{R}}\big(\varphi(R^{-1}x)-1\big)Q(x)[\sqrt{-\Delta},\varphi_R]Q(x) dx\Big| \\
	&\leq C \int_{\mathbb{R}\setminus B_{\frac12R}}\Big(|Q(x)||\sqrt{-\Delta}Q(x)|+|Q(x)|\big|[\sqrt{-\Delta},\varphi_{R}]Q(x)\big|\Big)dx \\
	&=B_{1}+B_{2}.
\end{align*}
According to the decay rate of $Q$ (\ref{decay}) and (\ref{eq2.41}), we obtain that
\begin{equation*}
	|B_1|\leq C\int_{\mathbb{R}\setminus B_{\frac12R}}|x|^{-8} +|x|^{-4}dx\leq C(R^{-7}+R^{-3}).
\end{equation*}
It  follows from the H\"{o}lder inequality and (\ref{eq2.44}) that
\begin{align*}
	\left|B_{2}\right|& \leq C\Big(\int_{\mathbb{R}\setminus B_{\frac12R}}Q^{2}(x) dx\Big)^{\frac{1}{2}}\|[\sqrt{-\Delta},\varphi_{R}]Q\|_{2} \\
	&\leq C\Big(\int_{\mathbb{R}\setminus B_{\frac12R}}|x|^{-4}dx\Big)^{\frac{1}{2}}\|\nabla\varphi_R\|_\infty\|Q\|_2 \\
	&\leq CR^{-\frac32}R^{-1}\|Q\|_{2}^{2} \\
	&\leq CR^{-\frac52}.
\end{align*}
As a consequence,
\begin{equation}\label{eq2.45}
	\begin{split}
		&\int_{\mathbb{R}}\psi_R\sqrt{-\Delta}\psi_R dx \\
		&\leq\frac R{\|Q\|_2^2}\int_{\mathbb{R}}Q(x)\sqrt{-\Delta}Q(x) dx+O(R^{-\frac32}).
	\end{split}
\end{equation}
Next, we consider the following term,
\begin{align*}
	\int_{\mathbb{R}}|\psi_R|^4 dx
	&=\frac{A_{R}^{4}R^{2}}{\|Q\|_{2}^{4}}\int_{\mathbb{R}}\varphi^{4}(x-x_{0})Q^{4}(R(x-x_{0}))dx \\
	&=\frac{A_{R}^4R}{\|Q\|_{2}^{4}}\int_{\mathbb{R}}\varphi^{4}(R^{-1}x)Q^{4}(x)dx \\
	&=:\frac{A_R^4R}{\|Q\|_2^4}F,
\end{align*}
where $F$  can be expressed as
\begin{align*}
	\text{F}& =\int_{\mathbb{R}}\varphi^{4}(R^{-1}x)Q^{4}(x) dx \\
	&=\int_{\mathbb{R}}Q^{4}(x)dx+\int_{\mathbb{R}}\big(\varphi^{4}(R^{-1}x)-1\big)Q^{4}(x)dx \\
	&=:\int_{\mathbb{R}}Q^{4}(x)dx+F_{1}.
\end{align*}
By the decay rate of $Q$ (\ref{decay}), we have
\begin{align*}|F_1|& \leq\int_{\mathbb{R}}\Big|\varphi^{4}(R^{-1}x)-1\Big|Q^{4}(x)dx \\
	&\leq\int_{\mathbb{R}\setminus B\frac{1}{2}R}Q^{4}(x)dx \\
	&\leq CR^{-7}.
\end{align*}
Therefore, according to (\ref{AR}), it is deduced that
\begin{align}\label{eq2.46}
	\int_{\mathbb{{R}}}|\psi_R|^{4}dx
	&=\frac{A_R^{4}R}{||Q||_{2}^{4}}\Big(\int_{\mathbb{R}}Q^{4}(x)dx+F_{1}\Big)\notag \\
	&\geq\frac{R}{\|Q\|_{2}^{4}}\Big(\int_{\mathbb{R}}Q^{4}(x)dx+F_{1}\Big) \notag\\
	&\geq\frac{R}{\|Q\|_{2}^{4}}\Big(\int_{\mathbb{R}}Q^{4}(x)dx-|F_{1}|\Big) \\
	&\geq\frac{R}{\|Q\|_{2}^{4}}\Big(\int_{\mathbb{R}}Q^{4}(x)dx-O\big(R^{-7})\Big) \notag\\
	&=\frac{2R}{a^{*}}-O\big(R^{-6}\big). \notag
\end{align}
Moreover, by (\ref{AR}), we also have
\begin{align}\label{eq2.47}
	\int_{\mathbb{R}}|\psi_R|^{4}dx
	&=\frac{A_{R}^{4}R}{||Q||_{2}^{4}}\Big(\int_{\mathbb{R}}Q^{4}(x)dx+F_{1}\Big) \notag\\
	&\leq\frac{R}{\|Q\|_{2}^{4}}\Big(\int_{\mathbb{R}}Q^{4}(x)+|F_{1}|\Big)\Big(1+O(R^{-3})\Big)^{2} \notag\\
	&\le\frac{R}{||Q||_{2}^{4}}\Big(\int_{\mathbb{{R}}}Q^{4}(x)dx+Q\big(R^{-3}\big)\Big) \\
	&=\frac{R}{||Q||_{2}^{4}}\int_{\mathbb{{R}}}Q^{4}(x)+O\big(R^{-2}\big)\notag\\
	&=\frac{2R}{a^{*}}+O\big(R^{-2}\big). \notag
\end{align}
Consequently, (\ref{eq2.48})  can be derived from  (\ref{eq1.11}) and (\ref{eq2.45}). Furthermore, by combining (\ref{eq2.46}) and (\ref{eq2.47}), we obtain (\ref{eq2.49}).

\begin{theorem}\label{theorem 2.4}
	Suppose that condition $(\mathcal{D}_{1})$ holds, and let $a_{1},a_2 >0$,\ \ $\beta\in \mathbb{R}$, then we have:\\
		\noindent \textnormal{(i)} If $\Gamma(a_1,a_2,\beta)>1$, there exists a minimizer for  problem (\ref{problem 1.7}). \\	
	\noindent \textnormal{(ii)} If $\Gamma(a_1,a_2,\beta)<1$, there does not exist any minimizer for problem (\ref{problem 1.7}).
\end{theorem}

\begin{proof}
	(i)	Let $\left \{ (u_{1n},u_{2n}) \right \}  \subset \mathcal{M}$ be a minimizing sequence of problem (\ref{problem 1.7}), then
	\begin{equation*}
		\|u_{1n}\|_2^2=\|u_{2n}\|_2^2=1\quad\mathrm{and}\quad\lim_{n\to\infty}E_{a_1,a_2,\beta}(u_{1n},u_{2n})=\hat{e}(a_1,a_2,\ \beta).
	\end{equation*}
	It follows from the definition of $\Gamma(a_1,a_2,\beta)$  in (\ref{problem2.1}) that
	\begin{equation}\label{eq2.28}
\begin{split}
			E_{a_1,a_2,\beta}(u_{1n},u_{2n})	&\geq(1-\frac{1}{\Gamma(a_{1},a_{2},\beta)})\Big[\|(-\Delta)^{\frac{1}{4}}u_{1n}\|_{2}^{2}+\|(-\Delta)^{\frac{1}{4}}u_{2n}\|_{2}^{2}\Big]\\
	&\quad\quad+\sum_{i=1}^{2}\int_{\mathbb{R} }  V_{i}(x)|u_{in}|^{2}dx.
\end{split}
	\end{equation}
	If $\Gamma(a_1,a_2,\ \beta)>1$, we see from (\ref{eq2.28}) that $\{(u_{1n},u_{2n})\}$ is bounded in $\mathcal{X}$.  Therefore, by the compactness of  Lemma \ref{lemma2.1}, there exists $(u_1,u_2)\in\mathcal{X}$ such that
	\begin{align*}&(u_{1n},u_{2n})\stackrel{n}{\rightharpoonup}(u_{1},u_{2})\ \text{ weakly in }\mathcal{X},\\&(u_{1n},u_{2n})\stackrel{n}{\to}(u_{1},u_{2})\ \text{ strongly in }L^{q}(\mathbb{R})\times L^{q}(\mathbb{R})\text{ for }q\in[2,\infty).\end{align*}
	Moreover, it is known from Lemma \ref{lemma limit} that
	\begin{equation*}
		\lim_{n\to\infty}\int_{\mathbb{R}}|u_{in}|^{4}dx=\int_{\mathbb{R}}|u_{i}|^{4}dx,\ \ i=1, 2.
	\end{equation*}
	Since  $\left(\sqrt{-\Delta}u,u\right)$ is weakly lower semi-continuous and $\|u_1\|_2^2=\|u_2\|_2^2=1$, we   yield  that
	\begin{equation*}
		\hat{e}(a_1,a_2,\beta)\leq E_{a_1,a_2, \beta}(u_1,u_2)\leq\lim_{n\to\infty}E_{a_1,a_2, \beta}(u_{1n},u_{2n})=\hat{e}(a_1,a_2, \beta),
	\end{equation*}
	which implies that $E_{\alpha_1,\alpha_2, \beta}(u_1,u_2)=\hat{e}(a_1,a_2,\beta)$, i.e., $(u_1,u_2)$ is a minimizer of problem (\ref{problem 1.7}).  This completes the proof of part (i).
	
	(ii) Suppose now that $\Gamma(a_1,a_2,\beta)<1$, then one may choose $(u_1,u_2)\in \mathcal{M}$ such that $u_1$ and $u_2$ have compact support in $\mathbb{R}$ and satisfies
	\begin{equation}\label{eq2.29}
\begin{split}
		&\frac{2\|(-\Delta)^{\frac14}u_1\|_2^2+2\|(-\Delta)^{\frac14}u_2\|_2^2}{a_{1}\int_{\mathbb{R}}|u_{1}|^{4}dx+a_{2}\int_{\mathbb{R}}|u_{2}|^{4}dx+2\beta^{+}\int_{\mathbb{R}}|u_{1}|^{2}|u_{2}|^{2}dx}\\ &\leq\rho_0:=\frac{1+\Gamma(a_1,a_2,\beta)}2<1.
\end{split}
	\end{equation}
	For $\lambda>0$, denote
	\begin{equation}\label{eq2.30}
		\hat{u}_i(x)=\lambda^{\frac{1}{2}}u_i(\lambda x),\ \ i=1, 2.
	\end{equation}
	It is easy to see that $(\hat{u}_1,\hat{u}_2)\in\mathcal{M}$.
	Since $u_i(x)$ has compact support in $\mathbb{{R}}$ and $V_i(x)\in L_{\mathrm{loc}}^\infty(\mathbb{R})$, there exists  a positive constant $C$, independent of $\lambda>0$,  such that for $\lambda\to \infty$,
	\begin{equation}\label{eq2.31}
		\int_{\mathbb{R}}V_i(x)|\hat{u}_i|^2dx=\int_{\mathbb{R}}V_i(\frac{x}{\lambda})|u_i|^2dx\leq C<\infty, \ \ i=1,2,
	\end{equation}
	which, together with  (\ref{eq2.29}) and (\ref{eq2.30}),  implies that, for $\beta\ge0$,
	\begin{align}\label{eq2.32}
		\begin{aligned}E_{a_{1},a_{2},\beta}(\hat{u}_{1},\hat{u}_{2})& =\sum_{i=1}^2\Big(\|(-\Delta)^{\frac14}\hat{u}_i\|_2^2+\int_{\mathbb{R}}V_i(x)|\hat{u}_i|^2dx-\frac{a_i}2\int_{\mathbb{R}}|\hat{u}_i|^{4}dx\Big) \\&\quad-\beta\int_{\mathbb{R}}|\hat{u}_1|^{2}|\hat{u}_2|^{2}dx \\&=\lambda\left[\sum_{i=1}^2\Big(\|(-\Delta)^{\frac14}u_i\|_2^2-\frac{a_i}2\int_{\mathbb{R}}|u_i|^{4}dx\Big)-\beta\int_{\mathbb{R}}|u_1|^{2}|u_2|^{2}dx\right] \\&\quad+\sum_{i=1}^2\int_{\mathbb{R}}V_i(x)|\hat{u}_i|^2dx\\&\leq\lambda(\rho_0-1)\Big(\sum_{i=1}^2\frac{a_i}2\int_{\mathbb{R}}|u_{i}|^{4}dx+\beta\int_{\mathbb{R}}|u_{1}|^{2}|u_{2}|^{2}dx\Big)\\&\quad+\sum_{i=1}^2\int_{\mathbb{R}}V_i(x)|\hat{u}_i|^2dx\\ &\to-\infty\ \ \mathrm{as}\ \  \lambda\to+\infty.\end{aligned}
	\end{align}
	This implies  that
	\begin{equation*}
		\hat{e}(a_1,a_2,\beta)\leq E_{a_1,a_2,\beta}(\hat{u}_1,\hat{u}_2)\to-\infty\ \ \mathrm{as}\ \ \lambda\to+\infty,
	\end{equation*}
	which means  that $\hat{e}(a_1,a_2,\beta)$ can not  admit any  minimizer if $\beta\geq0$.
	
	Now, we  consider the case  $\beta<0$, i.e., $\beta^+=0$. If  $\Gamma(a_1,a_2,\beta)<1$, we have
	\begin{equation*}
		a_1>a^*\ \ \text{or }\ \ a_2>a^*.
	\end{equation*}
	Indeed, if  $0<a_1,a_2\le a^*$,  according to the Gagliardo-Nirenberg inequality (\ref{eq1.10}), for $u_i\in H^{\frac12}(\mathbb{R})$ satisfying $\|u_i\|_2^2=1$,  $i=1,2$, we have
	\begin{equation*}
		\begin{aligned} J_{a_1 ,a_2,\beta}(u_1,u_2)&=\frac{2\|(-\Delta)^{\frac14}u_1\|_2^2+2\|(-\Delta)^{\frac14}u_2\|_2^2}{a_{1}\int_{\mathbb{R}}|u_{1}|^{4}dx+a_{2}\int_{\mathbb{R}}|u_{2}|^{4}dx}\geq\frac{a^*\int_{\mathbb{R}}|u_{1}|^{4}dx+a^*\int_{\mathbb{R}}|u_{2}|^{4}dx}{a_1\int_{\mathbb{R}}|u_{1}|^{4}dx+a_2\int_{\mathbb{R}}|u_{2}|^{4}dx}\\&\geq1.\end{aligned}
	\end{equation*}
	This demonstrates that $\Gamma(a_1,a_2,\beta)\ge 1$, which is in contradiction with our assumption. Therefore, without loss of generality, we now suppose that $a_1>a^*$.
	
	Since the bounded function $x\mapsto V_{i}(x)\varphi^{2}(\frac{x-x_{0}}{R})$  has compact support, where $\varphi$ is given by (\ref{fai}). Using the dominated convergence theorem, we obtain the following result, for $\psi_R$ defined in (\ref{eq2.33}),
	\begin{align}\label{eq2.50}
		\operatorname*{lim}_{R\to\infty}\int_{\mathbb{R}}V_{i}(x)\psi_R^{2}dx& =\lim_{R\to\infty}\int_{\mathbb{R}}V_i(x)A_R^2\frac{R}{\|Q\|_2^2}\varphi^2(x-x_0)Q^2\big(R(x-x_0)\big)dx \notag\\
		&=\lim_{R\to\infty}\frac{A_R^2}{\|Q\|_2^2}\int_{\mathbb{R}}V_i(R^{-1}x+x_0)\varphi^2(R^{-1}x)Q^2(x)dx \\
		&=V_i(x_0)\ \ i=1,2.\notag
	\end{align}
	Moreover, based on the H\"{o}lder inequality and  (\ref{eq2.49}), it can be inferred that
	\begin{equation}\label{eq2.51}
\begin{split}
			\int_{\mathbb{R}}|\psi_R|^2|\nu|^2dx&\leq\Big(\int_{\mathbb{R}}|\psi_R|^4dx\int_{\mathbb{R}}|\nu|^4dx\Big)^{\frac12}\\
	&=\sqrt{\frac2{a^*}}R^{\frac12}\Big(\int_{\mathbb{R}}|\nu|^4dx\Big)^{\frac12}+O(R^{-1}),
\end{split}
	\end{equation}
	which, together with (\ref{eq2.48}), (\ref{eq2.49}) and (\ref{eq2.50}), implies that
	for $0\leq\nu\in C_0^\infty(\mathbb{R})$ with $\int_{\mathbb{R}}|\nu|^2dx=1$,
	\begin{align*}
		E_{a_{1},a_{2},\beta}(\psi_R,\nu)& =\|(-\Delta)^{\frac{1}{4}}\psi_R\|_{2}^{2}+\|(-\Delta)^{\frac{1}{4}}\nu\|_{2}^{2}-\frac{a_{1}}{2}\int_{\mathbb{R}}|\psi_R|^{4}dx-\frac{a_{2}}{2}\int_{\mathbb{R}}|\nu|^{4}dx \\
		&\quad-\beta\int_{\mathbb{R}}|\psi_R|^{2}|\nu|^{2}dx+\int_{\mathbb{R}}V_1(x)\psi_R^{2}dx+\int_{\mathbb{R}}V_2(x)\nu^2dx \\
		&\leq R\big(\!1-\frac{a_1}{a^*}\!\big)\!+ \! \|(-\Delta)^{\frac14}\nu\|_2^2\!- \!\frac{a_2}2\int_{\mathbb{R}}|\nu|^{4}dx \!+ \! |\beta|\sqrt{\frac2{a^*}}R^{\frac12}\Big(\!\int_{\mathbb{R}}|\nu|^{4}dx\!\Big)^{\frac12} \\
		&\quad+\int_{\mathbb{R}}V_1(x)\psi_R^2dx+\int_{\mathbb{R}}V_2(x)\nu^2dx+O(R^{-1})+O(R^{-\frac32})+O(R^{-6}) \\
		&\to-\infty\ \ \text{ as }\ \ R\to+\infty.
	\end{align*}
	\renewcommand{\qedsymbol}{}
	The case   $a_2>a^*$ can be addressed in a similar manner, therefore,  $	\hat{e} (a_{1}, a_{2},  \beta)$ has no minimizer for $\beta<0$, either.
\end{proof}
\renewcommand{\qedsymbol}{}
\end{proof}
\section{Proofs of the main theorems}\label{section3}
In this section, we prove Theorems \ref{theorem1.1}, \ref{theorem1.2}, and \ref{theorem1.3}.

\textbf{Proof of Theorem} \ref{theorem1.1}. \textbf{(i):} For any $(u_{1},u_2)\in \mathcal{M}$, applying the  H\"older inequality and the Gagliardo-Nirenberg inequality  (\ref{eq1.10}) yields that
\begin{align*}
	J_{a_{1},a_{2},\beta}(u_{1},u_{2})&\geq\frac{a^{*}\Big(\int_{\mathbb{R}}|u_{1}|^{4}dx+\int_{\mathbb{R}}|u_{2}|^{4}dx\Big)}{a_{1}\int_{\mathbb{R}}|u_{1}|^{4}dx+a_{2}\int_{\mathbb{R}}|u_{2}|^{4}dx+2\beta^{+}\Big(\int_{\mathbb{R}}|u_{1}|^{4}dx\int_{\mathbb{R}}|u_{2}|^{4}dx\Big)^{\frac{1}{2}}}\\&=\frac{a^{*}\Big(1+\int_{\mathbb{R}}|u_{2}|^{4}dx/\int_{\mathbb{R}}|u_{1}|^{4}dx\Big)}{a_{1}+a_{2}\int_{\mathbb{R}}|u_{2}|^{4}dx/\int_{\mathbb{R}}|u_{1}|^{4}dx+2\beta^{+}\Big(\int_{\mathbb{R}}|u_{2}|^{4}dx/\int_{\mathbb{R}}|u_{1}|^{4}dx\Big)^{\frac{1}{2}}}.
\end{align*}
Let
\begin{equation}\label{eq3.1}
	\kappa_{a_{1}, a_{2}, \beta^+}(t):=\frac{a^*(1+t^2)}{a_1+a_2t^2+2\beta^+t}\ \ \text{and}\ \  t_1:=\frac{\int_{\mathbb{R}}|u_{2}|^{4}dx}{\int_{\mathbb{R}}|u_{1}|^{4}dx}.
\end{equation}\\
For any $(u_{1},u_{2})\in\mathcal{M}$, we have
\begin{equation*}
	J_{a_{1},a_{2},\beta}\big(u_{1},u_{2}\big)\geq\kappa_{a_{1},a_{2},\beta^+}(t_{1}).
\end{equation*}
This, combined with  the definition of $\Gamma(a_1,a_2,\beta)$, gives that
\begin{equation}\label{eq3.2}
	\Gamma(a_1,a_2,\beta)=\inf\left\{J_{a_1,a_2,\beta}\big(u_1,u_2\big):(u_1,u_2)\in\mathcal{M}\right\}\geq\inf_{t\in(0,\infty)}\kappa_{a_1,a_2,\beta^+}(t).
\end{equation}
Then, for any $t\in(0,\infty)$,  it is known  that $\kappa_{a_1,a_2,\beta^+}(t)>1$, given that  $0<a_{1}, a_{2}<a^{*}$ and $\beta<\beta_*=\sqrt{(a^*-a_1)(a^*-a_2)}$. A direct computation  also  shows that
\begin{equation*}
	\displaystyle\lim_{  t \rightarrow 0^+ }\kappa_{a_1,a_2,\beta^+}(t)=\frac{a^*}{a_1}>1 \ \ \text{and}\ \  \displaystyle\lim_{  t \rightarrow \infty }\kappa_{a_1,a_2,\beta^+}(t)=\frac{a^*}{a_2}>1.
\end{equation*}
Due to the continuity of $ \kappa _{a_1, a_2, \beta^+ }( t)$, it can thus be concluded that 
\begin{equation*}
	 \mathop{\mathrm{inf\mathrm{} } }\limits _{t\in(0,\infty )}\kappa _{a_1, a_2, \beta^+ }(t) > 1,
\end{equation*}
 which together with (\ref{eq3.2}) yields that $\Gamma(a_1,a_2,\beta)>1$. Therefore, by Theorem \ref{theorem 2.4} (i), problem (\ref{problem 1.7}) admits a  minimizer.\\

\textbf{(ii)}:  Now, assume  that   $a_1>a^*$.\text{ Choosing  } $0\leq\nu\in C_0^\infty(\mathbb{R})\text{ satisfying }\int_{\mathbb{R}}|\nu|^2dx$
\noindent$=1$, it  then follows from  the definition of (\ref{eq2.33}), (\ref{eq2.48}) and (\ref{eq2.49}) that, for any $\beta\in\mathbb{R}$,
\begin{equation}\label{eq3.3}
\begin{split}
		J_{a_1,a_2,\beta}\big(\psi_R,\nu\big)& =\frac{2\|(-\Delta)^{\frac{1}{4}}\psi_R\|_{2}^{2}+2\|(-\Delta)^{\frac{1}{4}}\nu\|_{2}^{2}}{a_{1}\int_{\mathbb{R}}|\psi_R|^{4}dx+a_{2}\int_{\mathbb{R}}|\nu|^4dx+2\beta^{+}\int_{\mathbb{R}}|\psi_R|^2|\nu|^{2}dx} \\
	&\leq\frac{2R+2\|(-\Delta)^{\frac14}\nu\|_2^2+O(R^{-\frac32})}{2\frac{a_1}{a^*}R+a_2\int_{\mathbb{R}}|\nu|^4dx-O(R^{-6})}\to\frac{a^*}{a_1}<1\quad\mathrm{as}\  R\to+\infty,
\end{split}
\end{equation}
which shows that $\Gamma(a_1,a_2,\beta)<1$. Hence,   problem (\ref{problem 1.7}) has no minimizer by Theorem \ref{theorem 2.4} (ii). For the case  $ a_2>a^*$, we can obtain the non-existence of minimizers for problem (\ref{problem 1.7})  by  almost the same proof as the above case.

Finally, suppose that $\beta>\beta^*=\frac{a^*-a_1}2+\frac{a^*-a_2}2$. For $\psi_R$ given by (\ref{eq2.33}), it follows from (\ref{eq2.48}) and (\ref{eq2.49}) that
\begin{align}
	&\int_{\mathbb{R}}\psi_R\sqrt{-\Delta}\psi_R dx-\frac{a_i}{2}\int_{\mathbb{R}}|\psi_R|^{4}dx\notag \\
	&\leq\frac{R}{\|Q\|_{2}^{2}}\|(-\Delta)^{\frac{1}{4}}Q\|_{2}^{2}+O(R^{-\frac{3}{2}})-\frac{a_{i}R}{2\|Q\|_{2}^{4}}\int_{\mathbb{R}}Q^{4}dx+O(R^{-6}) \\
	&=R(1-\frac{a_{i}}{||Q||_{2}^{2}})+O(R^{-\frac{3}{2}})+O(R^{-6}) \ \  i=1,2, \notag
\end{align}
which,  together  with (\ref{eq2.50}), shows that
\begin{equation}\label{eq3.5}
\begin{split}
		 &E_{a_1,a_2,\beta}(\psi_R,\psi_R)\\
	&\leq R\Big(2-\frac{a_1+a_2+2\beta}{\|Q\|_2^2}\Big)+\sum_{i=1}^2V_i(x_0)+O(R^{-\frac32})+O(R^{-6})\stackrel{R}{\to}-\infty.
\end{split}
\end{equation}
Therefore, problem (\ref{problem 1.7}) has no minimizer.\\

\noindent\textbf{Proof of Theorem \ref{theorem1.2}.} Since $0<a_{1}\neq a_{2} < a^{*}$, it is easy to see that $\beta_{* }<\beta^{*}$. From Lemma \ref{lemma2.3}, we know that
\begin{equation*}
	\Gamma\big(a_1,a_2,\beta_*\big)>1\ \ \text{and} \ \ \Gamma\big(a_1,a_2,\beta^*\big)\leq1.
\end{equation*}
Furthermore, Lemma \ref{lemma2.2} states that $\Gamma\big(a_1,a_2,\beta\big)$ is locally Lipschitz continuous. Therefore,  there exists $\hat{\delta}\in(\beta_{*},\beta^{*}]$ such that $\Gamma\big(a_1,a_2,\beta\big)>1$  for any $\beta\in[\beta_{*},\hat{\delta})$. Consequently,  problem (\ref{problem 1.7}) has at least a minimizer for $\beta\in[\beta_{*},\hat{\delta})$ by Theorem \ref{theorem 2.4} (i).\\

Finally, we give the proof of Theorem \ref{theorem1.3}. It is easy to see from the  assumptions of Theorem \ref{theorem1.3}  that the set $\{(a_1,a_2,\beta)\in\mathbb{R}^3:(a_1,a_2,\beta)=(a^*-\beta,a^*-\beta,\beta)$ and $\beta\in(0,a^*)\}$ forms a segment. Moreover, $E_{a_{1},a_{2},\beta}(u_{1},u_{2})$ can be rewritten as
\begin{align}\label{eq3.6}
	E_{a_{1},a_{2},\beta}(u_{1},u_{2})&=\sum_{i=1}^2\int_{\mathbb{R}}u_i\sqrt{-\Delta}u_idx\!+\!\sum_{i=1}^2\int_{\mathbb{R}}V_i(x)|u_i|^2dx \!-\!\frac{a^*}2\int_{\mathbb{R}}(|u_1|^4+|u_2|^4)dx\notag\\
	&\quad+\frac\beta2\int_{\mathbb{R}}(|u_1|^4+|u_2|^4-2|u_1|^2|u_2|^2)dx.
\end{align}
From the Gagliardo-Nirenberg inequality (\ref{eq1.10}),  we have
\begin{align}
	\|(-\Delta)^{\frac{1}{4}}u_{i}\|^{2}-\frac{a^{*}}{2}\int_{\mathbb{R}}|u_i|^4dx&\geq0, \ \ i=1,2,\\\frac{\beta}{2}\int_{\mathbb{R}}(|u_1|^4+|u_2|^4-2|u_1|^2|u_2|^2)dx&\geq0, \ \ \mathrm{for}\ \beta>0,
\end{align}
which  yields that
\begin{equation}\label{eq3.7}
\begin{split}
		&E_{a_{1},a_{2},\beta}(u_{1},u_{2})\\&=\int_{\mathbb{R}}u_1\sqrt{-\Delta}u_1dx+\int_{\mathbb{R}}u_2\sqrt{-\Delta}u_2dx+\sum_{i=1}^2\int_{\mathbb{R}}V_i(x)|u_i|^2dx \\
	& \quad-\frac{a^*}2\int_{\mathbb{R}}(|u_1|^4+|u_2|^4)dx+\frac\beta2\int_{\mathbb{R}}(|u_1|^4+|u_2|^4-2|u_1|^2|u_2|^2)dx \\
	&\ge 0. 
\end{split}
\end{equation}
Combining (\ref{eq3.5}) and (\ref{eq3.7}), since $x_0\!\in \! \mathbb{{R}}$ is arbitrary and $(a_1+a_2+2\beta)/||Q||^2_2=2$ in (\ref{eq3.5}), it follows that
\begin{equation}\label{eq3.8}
	0\le \hat{e}(a^{*}-\beta,a^{*}-\beta,\beta)\le \mathop{\mathrm{inf\mathrm{} } }\limits _{x\in \mathbb{R} }\big(V_{1}(x)+V_{2}(x)\big).
\end{equation}

\noindent\textbf{Proof of Theorem \ref{theorem1.3}. (i)} In order to prove part (i), we assume that
\begin{equation*}
	\mathop{\mathrm{inf\mathrm{} } }\limits _{x\in \mathbb{R} }\big(V_{1}(x)+V_{2}(x)\big)=0\ \ \text{and}\ \   V_i(x)\ge0,\ \ i=1,2,
\end{equation*}
which implies that there exists $x_0\in \mathbb{{R}}$ such that $V_1(x_0)=V_2(x_0)=0$. By (\ref{eq3.8}), we have
\begin{equation}
	0\leq\hat{e}(a^*-\beta,a^*-\beta,\beta)\leq\inf_{x\in\mathbb{R}}\left(V_1(x)+V_2(x)\right)=V_1\big(x_0)+V_2(x_0\big)=0,
\end{equation}
which shows that $\hat{e}(a^*-\beta,a^*-\beta,\beta)=0$. Now, assuming  that $\hat{e}(a^*-\beta,a^*-\beta,\beta)=0$ has a minimizer $(\tilde{u}_1,\tilde{u}_2)\in\mathcal{M}$,  we then obtain  from (\ref{eq3.7}) that
\begin{equation}\label{eq3.10}
	\int_{\mathbb{R}}|\tilde{u}_{1}|^{4}dx=\int_{\mathbb{R}}|\tilde{u}_{2}|^{4}dx\
	\ \text{and}\ \ \int_{\mathbb{R}}|\tilde{u}_{1}|^{2}|\tilde{u}_{2}|^{2}dx=\Big(\int_{\mathbb{R}}|\tilde{u}_{1}|^{4}dx\int_{\mathbb{R}}|\tilde{u}_{2}|^{4}dx\Big)^{\frac{1}{2}},
\end{equation}
\begin{equation}\label{eq3.11}
	\|(-\Delta)^{\frac14}\tilde{u}_1\|_2^2=\frac{a^*}2\int_{\mathbb{R}}|\tilde{u}_{1}|^{4}dx\ \ \mathrm{and}\ \ \int_{\mathbb{R}}V_1(x)|\tilde{u}_1|^2dx=0.
\end{equation}
Since $\tilde{u}_i$ can be assumed to be nonnegative, it can thus be deduced from  the H\"older inequality and (\ref{eq3.10}) that
$\tilde{u}_1(x)\equiv\tilde{u}_2(x)\ge0$ in $\mathbb{{R}}$. This leads to a  contradiction,  as the first equality of (\ref{eq3.11}) implies
that $\tilde{u}_1(x)$ is equal to (up to a translation) $Q(x)$, whereas  the second equality of (\ref{eq3.11}) yields that
$\tilde{u}_1(x)$ has compact support in $\mathbb{{R}}$.

\textbf{(ii):}For $\mathcal{M}$ and $ \mathcal{X}$ defined by (\ref{problem 1.7}), denote
\begin{equation*}
	d(\vec{u},\vec{v}):=\|\vec{u}-\vec{v}\|_\mathcal{X},\ \ \vec{u}, \vec{v}\in\mathcal{M},
\end{equation*}
with
\begin{equation*}
	\|\vec{u}\|_\mathcal{X}=\left(\|u_1\|_{\mathcal{H}_1}^2+\|u_2\|_{\mathcal{H}_2}^2\right)^\frac12, \ \ \vec{u}=(u_1,u_2)\in\mathcal{X},\ \ \vec{v}=(v_1,v_2)\in\mathcal{X}.
\end{equation*}
It is easy to verify that $(\mathcal{M},d)$ forms a complete metric space. Hence, it follows from Ekeland's variational principle \cite[Theorem 5.1]{variational principle} that there exists a minimizing sequence $\{\vec{u}_n=(u_{1n},u_{2n})\}\subset\mathcal{M}$ for $\hat{e}(a^{*}-\beta,a^{*}-\beta,\beta)$ such that
\begin{align}
	\label{eq3.12}\hat{e}(a^*-\beta,a^*-\beta,\beta)\leq E_{a^*-\beta,a^*-\beta,\beta}(\vec{u}_n)\leq\hat{e}(a^*-\beta,a^*-\beta,\beta)+\frac1n,\\E_{a^*-\beta,a^*-\beta,\beta}(\vec{v})\geq \label{eq3.13}E_{a^*-\beta,a^*-\beta,\beta}(\vec{u}_n)-\frac1n\|\vec{u}_n-\vec{v}\|_{\mathcal{X}}\ \ \mathrm{for}\ \ \vec{v}\in\mathcal{M}.
\end{align}
By the compactness of Lemma \ref{lemma2.1}, it is suffices to demonstrate that $\{\vec{u}_n\!=\!(\!u_{1n},u_{2n}\!)\}$\ is bounded in $\mathcal{X}$ in order to prove that $\hat{e}(a^*-\beta,a^*-\beta,\beta)$  is attained. Indeed, if $\{\vec{u}_n=(u_{1n},u_{2n})\}$ is unbounded in $\mathcal{X}$, then there exists a  subsequence of $\{\vec {u}_n\}$, still denoted by $\{\vec {u}_n\}$, such that  $\|\vec{u}_n\|_{\mathcal{X}}\overset{n}{\operatorname*{\to}}+\infty$. By applying  the Gagliardo-Nirenberg inequality (\ref{eq1.10}), (\ref{eq3.7}) and (\ref{eq3.12}), we can infer that
\begin{equation}\label{eq3.14}
	\sum_{i=1}^2\int_{\mathbb{R}}V_i(x)|u_{in}|^2dx\leq E_{a^*-\beta,a^*-\beta,\beta}(\vec{u}_n)\leq\hat{e}(a^*-\beta,a^*-\beta,\beta)+\frac1n.
\end{equation}
It then follows from the definition  (\ref{eq1.8}) and $\|\vec{u}_n\|_{\mathcal{X}}\overset{n}{\operatorname*{\to}}+\infty$ that
\begin{equation}\label{eq3.15}
	\|(-\Delta)^{\frac14}u_{1n}\|_2^2+\|(-\Delta)^{\frac14}u_{2n}\|_2^2\overset{n}{\to}\infty.
\end{equation}
We now introduce the notation   $f_n\approx g_n$, which  represents two function sequences  satisfying $f_n/g_n\to1\mathrm{~as~}n\to\infty$. We now claim that, for $i=1, 2$,
\begin{align}
	\label{eq3.16}\int_{\mathbb{R}}|(-\Delta)^{\frac{1}{4}}u_{in}|^{2}dx&\approx\frac{a^*}{2}\int_{\mathbb{R}}|u_{in}|^{4}dx\xrightarrow{n}+\infty, \\
	\label{eq3.17}\int_{\mathbb{R}}|u_{1n}|^{4}dx&\approx\int_{\mathbb{R}}|u_{2n}|^{4}dx,\\
	\label{eq3.18}\int_{\mathbb{R}}|u_{1n}|^{2}|u_{2n}|^{2}dx&\approx\int_{\mathbb{R}}|u_{2n}|^{4}dx.
\end{align}
By (\ref{eq3.15}), without loss of generality, we may assume that $	\int_{\mathbb{R}}|(-\Delta)^{\frac{1}{4}}u_{1n}|^{2}dx\xrightarrow{n}\infty $.
It can be deduced from (\ref{eq3.12}) that, for $n$ large enough,
\begin{equation}\label{eq3.19}
	0\leq\int_{\mathbb{R}}|(-\Delta)^{\frac{1}{4}}u_{1n}|^{2}dx-\frac{a^*}{2}\int_{\mathbb{R}}|u_{1n}|^{4}dx\leq\hat{e}(a^{*}-\beta,a^{*}-\beta,\beta)+\frac{1}{n}<C,
\end{equation}
where $C$ is a constant.

Multiplying (\ref{eq3.19}) by $1\bigg/\int_{\mathbb{R}}|(-\Delta)^{\frac{1}{4}}u_{1n}|^{2}dx$, we have
\begin{equation}
\begin{split}
		0&\leq1-\frac{a^{*}}{2}\int_{\mathbb{R}}|u_{1n}|^{4}dx\bigg/\int_{\mathbb{R}}|(-\Delta)^{\frac{1}{4}}u_{1n}|^{2}dx\\&\leq C\bigg/\int_{\mathbb{R}}|(-\Delta)^{\frac{1}{4}}u_{1n}|^{2}dx\to0,\ \ \text{as}\ \  n\to\infty,
\end{split}
\end{equation}
which  implies that
\begin{align}  \label{eq3.21} \frac{a^{*}}{2}\int_{\mathbb{R}}|u_{1n}|^{4}dx\xrightarrow{n}\infty \ \ \text{and} \ \ \int_{\mathbb{R}}|(-\Delta)^{\frac{1}{4}}u_{1n}|^{2}dx\bigg/\{\frac{a^*}{2}\int_{\mathbb{R}}|u_{1n}|^{4}dx\}\xrightarrow{n}1.\end{align}
On the other hand, we derive from the Gagliardo-Nirenberg inequality (\ref{eq1.10}) and  (\ref{eq3.12})   that, for $\beta>0$,
\begin{equation}\label{eq3.22}
	0\leq\frac{\beta}{2}\int_{\mathbb{R}}(|u_{1n}|^{4}+|u_{2n}|^{4}-2|u_{1n}|^{2}|u_{2n}|^{2})dx\leq\hat{e}(a^{*}-\beta,a^{*}-\beta,\beta)+\frac{1}{n}.
\end{equation}
Then, for $n$ large,
\begin{align} \notag &\frac{\beta}{2}\Big|\Big(\int_{\mathbb{R}}|u_{1n}|^{4}dx\Big)^{\frac{1}{2}}-\Big(\int_{\mathbb{R}}|u_{2n}|^{4}dx\Big)^{\frac{1}{2}}\Big|^{2}\\ & \leq\frac{\beta}{2}\int_{\mathbb{R}}(|u_{1n}|^{4}+|u_{2n}|^{4}-2|u_{1n}|^{2}|u_{2n}|^{2})dx\\&\leq\hat{e}(a^*-\beta,a^*-\beta,\beta)+\frac{1}{n}\leq \notag C,\end{align}
which, together with $\int_{\mathbb{R}}|u_{1n}|^{4}dx\xrightarrow{n}\infty$, implies that
\begin{equation}\label{eq3.24}
	\int_{\mathbb{R}}|u_{2n}|^4dx\xrightarrow{n}\infty\ \ \mathrm{and}\ \ \int_{\mathbb{R}}|u_{1n}|^4dx\bigg/\int_{\mathbb{R}}|u_{2n}|^4dx\xrightarrow{n}1.
\end{equation}
Similar to (\ref{eq3.19}), we also obtain that
\begin{equation*}
	0\leq\int_{\mathbb{R}}|(-\Delta)^{\frac{1}{4}}u_{2n}|^{2}dx-\frac{a^*}{2}\int_{\mathbb{R}}|u_{2n}|^{4}dx\leq\hat{e}(a^{*}-\beta,a^{*}-\beta,\beta)+\frac{1}{n}.
\end{equation*}
Putting together this with  (\ref{eq3.24}), we have
\begin{equation}\label{eq3.25}
	\int_{\mathbb{R}}|(-\Delta)^{\frac{1}{4}}u_{2n}|^{2}dx\xrightarrow{n}\infty\ \ \mathrm{and}\ \ \int_{\mathbb{R}}|(-\Delta)^{\frac{1}{4}}u_{2n}|^{2}dx\bigg/\{\frac{a^*}2\int_{\mathbb{R}}|u_{2n}|^4dx\}\xrightarrow{n}1.
\end{equation}
Therefore, (\ref{eq3.16}) and (\ref{eq3.17}) are derived from (\ref{eq3.21}),   (\ref{eq3.24}) and (\ref{eq3.25}).

Next, we turn to complete the proof of   (\ref{eq3.18}).
It follows  from  (\ref{eq3.22}) and $\beta>0$ that,  for some constant $C>0$,
\begin{equation*}
	0\leq\int_{\mathbb{R}}(|u_{1n}|^{4}+|u_{2n}|^{4}-2|u_{1n}|^{2}|u_{2n}|^{2})dx\leq C.
\end{equation*}
Multiplying this by $1\bigg/\int_{\mathbb{R}}|u_{2n}|^{4}dx$, we have
\begin{equation}\label{eq3.26}
	0\leq1+\frac{\int_{\mathbb{R}}|u_{1n}|^{4}dx}{\int_{\mathbb{R}}|u_{2n}|^{4}dx}-2\frac{\int_{\mathbb{R}}|u_{1n}|^{2}|u_{2n}|^{2}dx}{\int_{\mathbb{R}}|u_{2n}|^{4}dx}\leq\frac{C}{\int_{\mathbb{R}}|u_{2n}|^{4}dx}\xrightarrow{n}0,
\end{equation}
which, together with (\ref{eq3.24}), implies that
\begin{equation*}
	\frac{\int_{\mathbb{R}}|u_{1n}|^{2}|u_{2n}|^{2}dx}{\int_{\mathbb{R}}|u_{2n}|^{4}dx}\xrightarrow{n}1.
\end{equation*}
Hence, this completes the proof of (\ref{eq3.18}).

To conclude the proof of part (ii), obtaining a contradiction under the condition (\ref{eq3.15}) is necessary. To achieve this,  define
\begin{equation*}
	\epsilon_n^{-1}:=\int_{\mathbb{R}}u_{1n}\sqrt{-\Delta}u_{1n}dx\ \ \mathrm{and}\ \ \tilde{w}_{in}(x)=\epsilon_n^{\frac12}u_{in}(\epsilon_nx),\ \ i=1,2.
\end{equation*}
It is easy to verify that $\|\tilde{w}_{in}\|_2=\|u_{in}\|_2=1$ for $i=1,2$.   By (\ref{eq3.16}), we infer that $\epsilon_n\xrightarrow{n}0$.
Furthermore, we claim that there exists a sequence ${{\left \{ y_{\epsilon_n} \right \} }\subset\mathbb{{R}}}$, as well as positive constants $\eta$ and  $R_0$, such that
\begin{equation}\label{eq3.27}
	\operatorname*{lim\inf}_{n\to\infty}\int_{B_{R_0}(y_{\epsilon_n})}|\tilde{w}_{1n}(x)|^2dx\geq\eta>0.
\end{equation}
To argue by contradiction, we assume that (\ref{eq3.27}) fails. Therefore,  passing to a subsequence if necessary, we deduce that,  for any $R>0$,
\begin{equation}\label{eq3.28}
	\lim\limits_{n\to\infty}\sup\limits_{y\in\mathbb{R}}\int_{B_R(y)}|\tilde{w}_{1n}(x)|^2dx=0.
\end{equation}
By applying the  vanishing lemma \cite[Lemma I.1]{vanish}, we deduce that $\tilde{w}_{1n}\xrightarrow{n}0$ strongly in $L^r(\mathbb{{R}})$ with $r\in(2,\infty)$, which then yields that
\begin{equation}\label{eq3.29}
	\int_{\mathbb{R}}{|\tilde{w}_{1n}|^4}dx\ \to0\ \ \mathrm{as}\ \ n\to\infty.
\end{equation}
However, using (\ref{eq3.16}) and the definition of $\epsilon_n$, we have
\begin{equation}\label{eq3.30}
	\int_{\mathbb{R}}|\tilde{w}_{1n}|^4dx=\epsilon_n\int_{\mathbb{R}}|u_{1n}|^4dx\overset{n}{\to}\frac{2}{a^*}\neq0,
\end{equation}
which results in a contradiction with  (\ref{eq3.29}). According to the same procedures as above, we can also obtain that
\begin{equation}\label{eq new}
	\operatorname*{lim\inf}_{n\to\infty}\int_{B_{R_0}(y_{\epsilon_n})}|\tilde{w}_{2n}(x)|^2dx\geq\eta>0.
\end{equation}

Now, define
\begin{equation*}
	w_{in}(x):=\tilde{w}_{in}\big(x+y_{\epsilon_n}\big)=\epsilon_n^{\frac{1}{2}}u_{in}\big(\epsilon_nx+\epsilon_ny_{\epsilon_n}\big),\ \ i=1,2,
\end{equation*}
where  $\left \{ {y_{\epsilon_n}} \right \} $  is the same as in (\ref{eq3.27}).
Then, we have
\begin{equation}\label{eq3.31}
	\operatorname*{lim\inf}_{n\to\infty}\int_{B_{R_0}(0)}|w_{1n}(x)|^2dx\geq\eta>0.
\end{equation}
We hereby  assert that   ${\left \{ \epsilon_ny_{\epsilon_n} \right \} }$ is a bounded sequence in $\mathbb{{R}}$. We argue by contradiction.  If   ${\left \{ \epsilon_ny_{\epsilon_n} \right \} }$ is not bounded,  we  derive from (\ref{eq3.14})  that, for some $C>0$,
\begin{align}\label{eq3.32}\notag
	\int_{\mathbb{R}}V_1(x)|u_{1n}|^2dx& =\int_{\mathbb{R}}V_1(\epsilon_nx+\epsilon_ny_{\epsilon_n})|w_{1n}|^2dx\\&\leq\sum\limits_{i=1}^2\int_{\mathbb{R}}V_i(x)|u_{in}|^2dx \\
	&\leq\hat{e}(a^*-\beta,a^*-\beta,\beta)+\frac1n\leq C.\notag
\end{align}
This is contradicted by the combination of Fatou's lemma and (\ref{eq3.31}), together with $V_1(x)\xrightarrow{|x|\to\infty}+\infty$.

For any $\varphi(x)\in C_0^\infty(\mathbb{R})$, we now define
\begin{equation*}
	\hat{\varphi}(x)=\varphi\big(\frac{x-\epsilon_ny_{\epsilon_n}}{\epsilon_n}\big),\ \ g(\tau,\sigma)=\frac{1}{2}\int_{\mathbb{R}}\big|u_{1n}+\tau u_{1n}+\sigma\hat{\varphi}\big|^2dx,
\end{equation*}
where $g(\tau,\sigma)$ satisfies
\begin{equation*}
	g\left(0,0\right)=\frac12,\ \ \frac{\partial g\left(0,0\right)}{\partial\tau}=\int_{\mathbb{R}}|u_{1n}|^2dx=1\ \ \mathrm{and} \ \ \frac{\partial g\left(0,0\right)}{\partial\sigma}=\int_{\mathbb{R}}u_{1n}\hat{\varphi}dx.
\end{equation*}
Then, by applying the implicit function theorem, we deduce that  there exist a constant  $\delta_{n}>0$ and  $\tau(\sigma)\in C^1\big((-\delta_n,\delta_n),\mathbb{R}\big)$ such that
\begin{equation*}
	\tau(0)=0, \ \ \tau'(0)=-\int_{\mathbb{R}}u_{1n}\hat{\varphi}dx,\ \ \mathrm{and}\ \ g(\tau(\sigma),\sigma)=g(0,0)=\frac{1}{2},
\end{equation*}
which gives that
\begin{equation*}
	\left(u_{1n}+\tau(\sigma)u_{1n}+\sigma\hat{\varphi},u_{2n}\right)\in\mathcal{M},\ \ \sigma\in(-\delta_n,\delta_n).
\end{equation*}
From (\ref{eq3.13}), we have
\begin{align*}
	&E_{a^*-\beta,a^*-\beta,\beta}(u_{1n}+\tau(\sigma)u_{1n}+\sigma\hat{\varphi},u_{2n})-E_{a^*-\beta,a^*-\beta,\beta}(u_{1n},u_{2n})\\
	&\geq-\frac1n\|(\tau(\sigma)u_{1n}+\sigma\hat{\varphi},0)\|_{\mathcal{X}} .
\end{align*}
Setting $\sigma\to 0^+$ and $\sigma\to 0^-$, respectively, we obtain that
\begin{equation}\label{eq3.33}
	\left|\langle E_{a^*-\beta,a^*-\beta,\beta}^{\prime}(u_{1n},u_{2n}),(\tau^{\prime}(0)u_{1n}+\hat{\varphi},0)\rangle\right|\leq\frac1n\|\tau^{\prime}(0)u_{1n}+\hat{\varphi}\|_{\mathcal{H}_1}.
\end{equation}
Based on the definitions of $\epsilon_n$ and $w_{1n}$, direct calculations give that
\begin{equation}\label{eq3.34}
	\tau^{\prime}(0)=-\int_{\mathbb{R}}u_{1n}\hat{\varphi}dx=-\epsilon_n^{\frac12}\int_{\mathbb{R}}w_{1n}\varphi dx ,\ \ \|\tau^{\prime}(0)u_{1n}+\hat{\varphi}\|_{\mathcal{H}_1}\leq C\epsilon_n.
\end{equation}
Set
\begin{equation}
	\begin{split}
		\mu_{1n}:=& \langle E_{a^*-\beta,a^*-\beta,\beta}^{'}(u_{1n},u_{2n}),(u_{1n},0)\rangle\!=\!2\int_{\mathbb{R}}u_{1n}\sqrt{-\Delta}u_{1n}dx\\ &+2\int_{\mathbb{R}}V_1(x)u_{1n}^2dx-2a_1\int_{\mathbb{R}}|u_{1n}|^4dx-2\beta\int_\mathbb{R}|u_{1n}|^2|u_{2n}|^2dx.
	\end{split}
\end{equation}
We  derive from (\ref{eq3.16})-(\ref{eq3.18}) and (\ref{eq3.32})  that, for $a_1=a^*-\beta$,
\begin{equation}\label{eq3.36}
	\mu_{1n}\approx-a^*\int_{\mathbb{R}}|u_{1n}|^4dx+2\int_{\mathbb{R}}V_1(x)u_{1n}^2dx,\ \ \mathrm{and} \ \ \mu_{1n}\epsilon_n\overset{n}{\to}-2.
\end{equation}
It thus follows from (\ref{eq3.33}), (\ref{eq3.34})-(\ref{eq3.36}) that
\begin{align}\notag&\left|\langle E_{a^*-\beta,a^*-\beta,\beta}^{\prime}(u_{1n},u_{2n}),(\tau^{\prime}(0)u_{1n}+\hat{\varphi},0)\rangle\right|\\&=\frac{2}{\sqrt{\epsilon_n}}\bigg|\int_{\mathbb{R}}\varphi\sqrt{-\Delta}w_{1n}dx+\epsilon_n\int_{\mathbb{R}}V_1(\epsilon_nx+\epsilon_ny_{\epsilon_n})w_{1n}\varphi dx\!-\!\frac{\mu_{1n}\epsilon_n}2\int_{\mathbb{R}}w_{1n}\varphi \notag dx\!\\ & \quad- \!\int_{\mathbb{R}}(a_1{w_{1n}^3}+\beta{w_{2n}^2{w_{1n}}})\varphi dx\bigg|\\&\leq\frac1n\|\tau^{\prime}(0)u_{1n}+\hat{\varphi}\|_{\mathcal{H}_1}\leq\frac{C\epsilon_n}n\to0\ \ \mathrm{as}\ \ n\to\infty,\notag
\end{align}
which means that
\begin{equation}\label{eq3.38}
\begin{split}
	\int_{\mathbb{R}}\varphi\sqrt{-\Delta} w_{1n}dx+\epsilon_n\int_{\mathbb{R}}V_1(\epsilon_nx+\epsilon_ny_{\epsilon_n})w_{1n}\varphi dx-\frac{\mu_{1n}\epsilon_n}{2}\int_{\mathbb{R}}w_{1n}\varphi dx\\-a_1\int_{\mathbb{R}}w_{1n}^3\varphi dx-\beta\int_{\mathbb{R}}{w_{2n}^2}w_{1n}\varphi dx\xrightarrow{n}0.
\end{split}
\end{equation}
Given that $\{w_{1n}\}$ and  $\{w_{2n}\}$ are bounded in $H^{\frac12}(\mathbb{R})$, one  may suppose that, for $w_1, w_2\in H^{\frac12}(\mathbb{R})$,
\begin{equation*}
	w_{in}\stackrel{n}{\rightharpoonup}w_i\text{ weakly in }H^{\frac12}(\mathbb{R}).
\end{equation*}
Moreover, we know from (\ref{eq3.31}) that $w_{1}\not\equiv0$. In the sequel, we will show  that $ w_{2}\not\equiv0$.

On basis of  (\ref{eq3.36}) and (\ref{eq3.38}), we found that $w_1$ and $w_2$ are weak solutions of the following equation
\begin{equation}\label{weak}
	\sqrt{-\Delta}w_1+w_1=a_1w_1^3+\beta w_2^2 w_1,
\end{equation}
where $ a_1=a^*-\beta$. By noting  $a_2=a_1$, we know that $w_1$ and $w_2$ also  satisfy
\begin{equation*}
	\sqrt{-\Delta}w_2+w_2=a_1w_2^3+\beta w_1^2 w_2,
\end{equation*}
which, together with (\ref{weak}), claims that $(w_1,w_2)$ is a weak solutions for the following system
\begin{equation}\label{eq3.39}
	\begin{cases}\sqrt{-\Delta}u+u=a_1u^3+\beta v^2u,\\\sqrt{-\Delta}v+v=a_1v^3+\beta u^2v.\end{cases}
\end{equation}
Arguing by contradiction, assume that $w_2\equiv0$, which implies that  $w_1$ satisfies the  equation
\begin{equation}\label{eq3.40}
	\sqrt{-\Delta}w_1+w_1=a_1w_1^3.
\end{equation}
According to Lemma 5.4 in \cite{x10}, it is known  that $w_1$ satisfies the following  Pohozaev identity
\begin{equation}\label{t1}
	\int _{\mathbb{R}}|w_1|^2dx =\frac{a_1}{2} \int _{\mathbb{R}}|w_1|^4dx .
\end{equation}
We  also deduce from (\ref{eq3.40}) that
\begin{equation}\label{t2}
	\int_{\mathbb{R}}|(-\Delta)^{\frac14}w_1|^2dx+\int_{\mathbb{R}}|w_1|^2dx=a_1\int_{\mathbb{R}}|w_1|^4dx.
\end{equation}
Combining   (\ref{t1}) and (\ref{t2}), yields that
\begin{equation}\label{t4}
	\int_{\mathbb{R}}|(-\Delta)^{\frac14}w_1|^2dx=\int_{\mathbb{R}}|w_1|^2dx=\frac{a_1}{2}\int_{\mathbb{R}}|w_1|^4dx.
\end{equation}
It then follows from the Gagliardo-Nirenberg inequality  (\ref{eq1.10}) and (\ref{t4})   that
\begin{equation}\label{t3}
	\frac{2}{a_1}\|w_1\|_2^2=\int_{\mathbb{R}}|w_1|^4dx\leq\frac{2}{a^*}\|(-\Delta)^{\frac{1}{4}}w_1\|_2^2\|w_1\|_2^2=\frac{2}{a^*}\|w_1\|_2^4.
\end{equation}
Since $a_1=a^*-\beta \in(0,a^*)$, we then obtain from (\ref{t3}) that
\begin{equation*}
	\|w_1\|_2^2\geq\frac{a^*}{a_1}>1,
\end{equation*}
which contradicts that  $\|w_1\|_2^2\leq\lim\limits_{n\to\infty}\|w_{1n}\|_2^2=1$.  Therefore, $w_2\not\equiv0$, and $(w_1,w_2)$ forms a nontrivial solution of the system (\ref{eq3.39}).

Let
\begin{equation*}
	\hat{\mathcal{X}}=H^{\frac{1}{2}}(\mathbb{R})\times H^{\frac{1}{2}}(\mathbb{R}), \ \ \|u\|_{H^{\frac{1}{2}}}^{2}=\int_{\mathbb{R}}(|(-\Delta)^{\frac{1}{4}}u|^{2}+u^{2})dx\ \ \text{and}
\end{equation*}
\begin{align*}
	\hat{\mathcal{M}}=\Big\{(u,v)\in\hat{\mathcal{X}}:u,v\neq0, 	&\|u\|_{H^{\frac{1}{2}}}^{2}=\int_{\mathbb{R}}(a_{1}|u|^4+\beta|v|^2|u|^{2})dx, \\ &\|v\|_{H^{\frac{1}{2}}}^{2}=\int_{\mathbb{R}}(a_{1}|v|^4+\beta|u|^2|v|^{2})dx\Big\},
\end{align*}
and
\begin{equation}\label{eq3.42}
	\hat{J}\left(u,v\right):=\frac12(\left\|u\right\|_{H^{\frac12}}^2+\left\|v\right\|_{H^{\frac12}}^2)-\frac14\int_{\mathbb{R}}(a_1|u|^4+a_1|v|^4+2\beta|u|^2|v|^2).
\end{equation}
For $Q>0$ being a positive radially symmetric  solution of (\ref{classical eq}), we know by Lemma \ref{app 1} and Lemma  \ref{app2} in the Appendix that
\begin{equation}\label{eq3.43}
	(u_0,v_0)=\begin{cases}(\frac{1}{\sqrt{a^*}}Q(x),\frac{1}{\sqrt{a^*}}Q(x)),&a_1\neq\beta\ \ \text{and} \ \ a_1+\beta=a^*,\\[2ex](\frac{1}{\sqrt{a_1}}Q\sin\theta,\frac{1}{\sqrt{a_1}}Q\cos\theta),&\theta\in(0,2\pi)\ \ \text{and} \ \ a_1=\beta=\frac{a^*}{2},\end{cases}
\end{equation}
is a minimizer for the following problem
\begin{equation*}
	\inf\{\hat{J}(u,v):(u,v)\in	\hat{\mathcal{M}}\}.
\end{equation*}
According to   Lemma 5.4 in \cite{x10}, a solution $(u,v)$ of system (\ref{eq3.39}) satisfies the following  Pohozaev identity
\begin{equation}\label{eq3.44}
	\int_{\mathbb{R}}|u|^2+|v|^2dx=\frac12\int\limits_{\mathbb{R}^3}(a_1|u|^4+a_1|v|^4+2\beta|u|^2|v|^2)dx,
\end{equation}
which,  combined with $(u,v)\in 	\hat{\mathcal{M}}$, shows that
\begin{equation}\label{eq3.45}
	\begin{split}
			\int_{\mathbb{R}}|(-\Delta)^\frac14u|^2+|(-\Delta)^\frac14v|^2dx&=\int_{\mathbb{R}}|u|^2+|v|^2dx\\
			&=\frac12\int_{\mathbb{R}}(a_1|u|^4+a_1|v|^4+2\beta|u|^2|v|^2)dx.
	\end{split}
\end{equation}
Combining  (\ref{eq3.42}), (\ref{eq3.43}) and (\ref{eq3.45}) gives that
\begin{equation}\label{eq3.46}
	\hat{J}(w_1,w_2)=\frac{1}{2}\int_{\mathbb{R}}|w_1|^2+|w_2|^2dx\geq \hat{J}(u_0,v_0)=\frac{1}{2}\int_{\mathbb{R}}|u_0|^2+|v_0|^2dx=1.
\end{equation}
Since $\|w_{in}\|_2^2\equiv1$ and $w_i\not\equiv0$, we have $0<\|w_i\|_2^2\leq1$, and then (\ref{eq3.46}) implies  that
\begin{equation}\label{eq3.47}
	\|w_i\|_2^2=\int_{\mathbb{R}}|w_i|^2=1, \ \  i=1, 2,
\end{equation}
which means that
\begin{equation}\label{eq3.48}
	w_{in}\overset{n}{\to} w_i\text{ strongly in }L^2(\mathbb{R}),\ \  i=1, 2.
\end{equation}
Given that ${\left \{ \epsilon_ny_{\epsilon_n} \right \} }$ is bounded in $\mathbb{R}$,
one may assume that $\epsilon_ny_{\epsilon_n}\xrightarrow{n}h_0\in\mathbb{R}$. Due to the  Fatou lemma, along with (\ref{eq3.32}), (\ref{eq3.47}) and (\ref{eq3.48}), we can infer that
\begin{align*}
	\hat{e}(a^{*}-\beta,a^{*}-\beta,\beta)& \geq\sum_{i=1}^2\int_{\mathbb{R}}\lim_{n\to\infty}V_i(\epsilon_nx+\epsilon_ny_{\epsilon_n})|w_{in}|^2dx \\
	&=\sum_{i=1}^2\int_{\mathbb{R}}V_i(h_0)|w_i|^2dx=V_1(h_0)+V_2(h_0),
\end{align*}
which contradicts (\ref{eq1.12}). Therefore, $\{(u_{1n},u_{2n})\}$ is bounded in $\mathcal{X}$, thereby completing the proof of Theorem \ref{theorem1.3} (ii).

\section{Appendix }
This appendix focuses on the proofs of the following lemmas, which are dedicated to proving Theorem \ref{theorem1.3} (ii).   We now come to consider  the following system
\begin{equation}\label{system4.1}
	\begin{cases}\sqrt{-\Delta}u+u=au^3+\beta v^2u,\ \ &(u,v)\in\hat{\mathcal{X}}=H^{\frac12}(\mathbb{R})\times H^{\frac12}(\mathbb{R}),\\\sqrt{-\Delta}v+v=av^3+\beta u^2v,\ \ &a,\ \beta\in\mathbb{R}^+.\end{cases}
\end{equation}
The system (\ref{system4.1}) can be described  by the following constrained
minimization problem
\begin{equation}\label{eq4.3}
	\rho _{1}:=\inf\left\{\hat{J}(u,v):(u,v)\in\hat{\mathcal{M}}\right\},
\end{equation}
where
\begin{equation}\label{eq4.2}
	\hat{J}(u,v)=\frac12(\|u\|_{H^{\frac12}}^2+\|v\|_{H^{\frac12}}^2)-\frac14\int_{\mathbb{R}}(a|u|^4+a|v|^4+2\beta|u|^2|v|^2)dx\ \ \text{and}
\end{equation}
\begin{align*}
	\hat{\mathcal{M}}=\Big\{(u,v)\in\hat{\mathcal{X}}:u,v\neq0, 	&\|u\|_{H^{\frac{1}{2}}}^{2}=\int_{\mathbb{R}}(a_{1}|u|^4+\beta|v|^2|u|^{2})dx, \\ &\|v\|_{H^{\frac{1}{2}}}^{2}=\int_{\mathbb{R}}(a_{1}|v|^4+\beta|u|^2|v|^{2})dx\Big\}.
\end{align*}
Here, 	$$\|u\|_{H^{\frac12}}^2=\int_{\mathbb{R}}(|(-\Delta)^{\frac14}u|^2+u^2)dx~\text{and}~\|v\|_{H^{\frac12}}^2=\int_{\mathbb{R}}(|(-\Delta)^{\frac14}v|^2+v^2)dx.$$ Then, the minimizers of (\ref{eq4.3}) are ground state solutions for (\ref{system4.1}).

For any $(u,v)\in \hat{\mathcal{M}}$, we have
\begin{equation*}
	\hat{J}(u,v)=\frac14\|u\|_{H^{\frac12}}^2+\frac14\|v\|_{H^{\frac12}}^2.
\end{equation*}
If $k>0$ and $s>0$  satisfy
\begin{equation}\label{eq4.4}
	ak+\beta s=1,\text{ and }\ as+\beta k=1,
\end{equation}
we  claim that
\begin{equation*}
	\left(u_0,v_0\right):=\left(\sqrt{k}Q,\sqrt{s}Q\right)\in\hat{\mathcal{M}},
\end{equation*}
where $Q>0$ is a  solution of equation (\ref{classical eq}).\\
It can  be derived
from  Pohozaev identity (\ref{eq1.11}) that
\begin{equation*}
	\left\|u_0\right\|_{H^{\frac12}}^2=\|\sqrt{k}Q\|_{H^{\frac12}}^2=2k\left\|Q\right\|_2^2=2ka^*,\ \ \text{and if }  ak+\beta s=1, 
\end{equation*}
\begin{equation*}
	\int_{\mathbb{R}}(a|u_0|^4\!+\!\beta|v_0|^2|u_0|^2)\!=\!(ak^{2}+\beta ks)\int_{\mathbb{R}}|Q|^4dx\!=\!2k(ak+\beta s)a^{*} \\
	\!=\!2ka^{*}\!=\!\|u_{0}\|_{H^{\frac{1}{2}}}^{2}.
\end{equation*}
Similarly, we   have
\begin{equation*}
	\left\|v_{0}\right\|_{H^{\frac12}}^2=\int_{\mathbb{R}}(a|v_0|^4+\beta|u_0|^2|v_0|^2)dx=2sa^*,\ \  \text{if}\  as+\beta k=1.
\end{equation*}
and then $(u_0,v_0)\in \hat{\mathcal{M}}$. It follows from the  definition of (\ref{eq4.3}) that
\begin{equation}\label{eq4.5}
	\rho_1\leq \hat{J}(u_0,v_0)=\frac{1}{4}\|u_0\|_{H^{\frac{1}{2}}}^2+\frac{1}{4}\|v_0\|_{H^{\frac{1}{2}}}^2=\frac{a^*}{2}(k+s).
\end{equation}

\begin{lemma}\label{app 1}
	Assuming that $k>0$, $s>0$ and (\ref{eq4.4}) is satisfied, then $(u_0,v_0)=(\sqrt{k}Q,\sqrt{s}Q)$ is a ground state solution for system (\ref{system4.1}) if $a\neq\beta$.
\end{lemma}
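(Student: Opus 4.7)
The plan is to combine the explicit construction of $(u_0,v_0)$ with the sharp lower bound supplied by the fractional Gagliardo--Nirenberg inequality (\ref{eq1.10}). First, the hypothesis $a\neq\beta$ forces the system (\ref{eq4.4}) to have a unique solution: subtracting the two equations yields $(a-\beta)(k-s)=0$, hence $k=s=1/(a+\beta)$, so $(u_0,v_0)=\bigl((a+\beta)^{-1/2}Q,\,(a+\beta)^{-1/2}Q\bigr)$. Since $Q$ solves (\ref{classical eq}) and the coefficients obey $ak+\beta s=1=as+\beta k$, direct substitution shows $(u_0,v_0)$ is a classical solution of (\ref{system4.1}). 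The membership $(u_0,v_0)\in\hat{\mathcal{M}}$ and the explicit value $\hat{J}(u_0,v_0)=\tfrac{a^*}{2}(k+s)=\tfrac{a^*}{a+\beta}$ are already obtained in the computation preceding (\ref{eq4.5}) via the Pohozaev relation (\ref{eq1.11}).

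Next, the main step is to show that $\hat{J}(u_0,v_0)$ is in fact the infimum $\rho_1$. Take any $(u,v)\in\hat{\mathcal{M}}$ and set $A:=\|u\|_{H^{1/2}}^2$, $B:=\|v\|_{H^{1/2}}^2$. The AM--GM inequality $\|(-\Delta)^{1/4}u\|_2^2\,\|u\|_2^2\le A^2/4$ combined with (\ref{eq1.10}) gives $\int_{\mathbb{R}}|u|^4\le A^2/(2a^*)$, and analogously $\int_{\mathbb{R}}|v|^4\le B^2/(2a^*)$; Cauchy--Schwarz then yields $\int_{\mathbb{R}}|u|^2|v|^2\le AB/(2a^*)$. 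Substituting these three estimates into the two defining identities of $\hat{\mathcal{M}}$ gives
\begin{equation*}
aA+\beta B\ge 2a^*\quad\text{and}\quad aB+\beta A\ge 2a^*,
\end{equation*}
and adding these and dividing by $a+\beta>0$ produces $A+B\ge 4a^*/(a+\beta)$. Therefore $\hat{J}(u,v)=(A+B)/4\ge a^*/(a+\beta)=\hat{J}(u_0,v_0)$, so $\rho_1=\hat{J}(u_0,v_0)$ and $(u_0,v_0)$ is a minimizer on $\hat{\mathcal{M}}$.

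Finally, any nontrivial solution of (\ref{system4.1}) lies in $\hat{\mathcal{M}}$ (test the two equations against $u$ and $v$ respectively and integrate), so the minimizer on $\hat{\mathcal{M}}$ has the smallest energy among all nontrivial solutions, which is the required ground state property. I do not anticipate any serious obstacle: the argument is a clean chain of Pohozaev, Gagliardo--Nirenberg, Cauchy--Schwarz, and AM--GM once $a\neq\beta$ is used to force the symmetric choice $k=s$. The one point worth underlining is that this hypothesis is essential: when $a=\beta$ the linear system (\ref{eq4.4}) degenerates to $a(k+s)=1$, producing the one-parameter family displayed in the second branch of (\ref{eq3.43}) and handled separately in Lemma \ref{app2}.
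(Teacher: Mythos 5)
Your proof is correct, but it proceeds quite differently from the paper's. The paper works with a minimizing sequence $\{(u_n,v_n)\}\subset\hat{\mathcal{M}}$, introduces the normalized quantities $e_{u_n},e_{v_n}$ in (\ref{eq4.7})--(\ref{eq4.8}), and after the change of variables $\eta_{u_n}=e_{u_n}-k$, $\eta_{v_n}=e_{v_n}-s$ reduces matters to the constraint region (\ref{eq4.13}), a triangle with vertex $(0,0)$ whose diameter shrinks to zero precisely because $a\neq\beta$; this forces $e_{u_n}\to k$, $e_{v_n}\to s$ and hence $\rho_1=\tfrac{a^*}{2}(k+s)=\hat J(u_0,v_0)$. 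You instead prove a direct lower bound valid for every individual $(u,v)\in\hat{\mathcal{M}}$: with $A=\|u\|_{H^{1/2}}^2$, $B=\|v\|_{H^{1/2}}^2$, the combination of (\ref{eq1.10}) with AM--GM and Cauchy--Schwarz gives $\int|u|^4\le A^2/(2a^*)$, $\int|v|^4\le B^2/(2a^*)$, $\int|u|^2|v|^2\le AB/(2a^*)$, whence the constraint identities yield $aA+\beta B\ge 2a^*$, $aB+\beta A\ge 2a^*$ and therefore
\begin{equation*}
\hat J(u,v)=\tfrac14(A+B)\ge \frac{a^*}{a+\beta}=\hat J(u_0,v_0),
\end{equation*}
where you use $a\neq\beta$ only through the elementary observation that (\ref{eq4.4}) then forces $k=s=1/(a+\beta)$. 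This is more elementary (no minimizing sequence, no shrinking-region geometry), and as a bonus the same chain gives the lower bound $\rho_1\ge a^*/(2a)$ in the degenerate case $a=\beta$, which the paper handles separately in Lemma \ref{app2} via symmetrization; you also explicitly verify by substitution that $(u_0,v_0)$ solves (\ref{system4.1}), a point the paper leaves implicit. Two small remarks: your AM--GM/Cauchy--Schwarz step needs $\beta>0$ and the division by $A,B>0$, both of which are guaranteed by $a,\beta\in\mathbb{R}^+$ and $u,v\neq 0$ on $\hat{\mathcal{M}}$, so it is worth saying so; and your closing sentence should read ``any solution with both components nontrivial lies in $\hat{\mathcal{M}}$'' --- semitrivial solutions do not --- though this does not matter here, since the paper defines ground states exactly as minimizers of (\ref{eq4.3}), which is what you have established.
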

\begin{proof}
	We now let $\{(u_n,v_n)\}\subset\hat{\mathcal{X}}$ be a minimizing sequence for (\ref{eq4.3}), that is,
	\begin{equation}\label{eq4.6}
		\{(u_n,v_n)\}\subset\hat{\mathcal{M}}\ \ \text{and}\ \ \hat{J}(u_n,v_n)=\frac14\|u_n\|_{H^{\frac12}}^2+\frac14\|v_n\|_{H^{\frac12}}^2\stackrel{n\to \infty}{\to} \rho_1.
	\end{equation}
	It follows from the Gagliardo-Nirenberg inequality (\ref{eq1.10}) that, for $u_n\in H^{\frac{1}{2}}(\mathbb{R})$,
	\begin{equation*}
		\int_{\mathbb{R}}|u_{n}|^4dx\leq\frac{2}{a^*}\|(-\Delta)^{\frac{1}{4}}u_n\|_2^2\|u_n\|_2^2\leq\frac{1}{2a^*}\|u_n\|_{H^{\frac{1}{2}}}^4.
	\end{equation*}
	Since $\{(u_n,v_n)\}\in \hat{\mathcal{M}}$,  we have
	\begin{align}
		\label{eq4.7}e_{u_{n}}&:=\frac{1}{\sqrt{2a^{*}}}\Big(\int_{\mathbb{R}}|u_{n}|^4dx\Big)^{\frac{1}{2}} \notag\\ &\leq\frac{1}{2a^{*}}\|u_{n}\|_{H^{\frac{1}{2}}}^{2}=\frac{1}{2a^{*}}\int_{\mathbb{R}}(a|u_{n}|^4+\beta|v_{n}|^2|u_{n}|^2)dx,\\ \label{eq4.8}e_{v_{n}}&:=\frac{1}{\sqrt{2a^{*}}}\Big(\int_{\mathbb{R}}|v_{n}|^4dx\Big)^{\frac{1}{2}}\notag \\ &\leq\frac{1}{2a^{*}}\|v_{n}\|_{H^{\frac{1}{2}}}^{2}=\frac{1}{2a^{*}}\int_{\mathbb{R}}(a|v_{n}|^4+\beta|u_{n}|^2|v_{n}|^2)dx.
	\end{align}
	In addition, using the H\"older inequality, we yield that
	\begin{equation}\label{eq4.9}
		\int_{\mathbb{R}}|u_{n}|^2|v_{n}|^2dx\le2a^*e_{u_n}e _{v_n},
	\end{equation}
	which, together with (\ref{eq4.7}) and (\ref{eq4.8}), implies that
	\begin{equation}\label{eq4.10}
		\begin{cases}
			2a^{*}e_{u_{n}}\leq\|u_{n}\|_{H^{\frac{1}{2}}}^{2}\leq2a^{*}\Big(ae_{u_{n}}^{2}+\beta e_{u_{n}}e_{v_{n}}\Big),\\2a^{*}e_{v_{n}}\leq\|v_{n}\|_{H^{\frac{1}{2}}}^{2}\leq2a^{*}\Big(ae_{v_{n}}^{2}+\beta e_{u_{n}}e_{v_{n}}\Big).
		\end{cases}
	\end{equation}
	Combining  (\ref{eq4.5}), (\ref{eq4.6}) and (\ref{eq4.10}), we infer that
	\begin{equation}\label{eq4.11}
		2a^*(e_{u_n}+e_{v_n})\leq\|u_n\|_{H^{\frac12}}^2+\|v_n\|_{H^{\frac12}}^2=4\rho_1+o(1)\leq2a^*(k+s)+o(1).
	\end{equation}
	This and (\ref{eq4.10}) give that
	\begin{equation}\label{eq4.12}
		\begin{cases}e_{u_n}+e_{v_n}\leq k+s+o(1),\\1\leq ae_{u_n}+\beta e_{v_n},\\1\leq\beta e_{u_n}+ae_{v_n}.\end{cases}
	\end{equation}
	We now define $\eta _{u_{n}}=e_{u_{n}}-k\mathrm{~and~}\eta_{v_{n}}=e_{v_{n}}-s$.  By (\ref{eq4.4}), we can rewrite (\ref{eq4.12}) as
	\begin{equation}\label{eq4.13}
		\begin{cases}\eta _{u_n}+\eta _{v_n}\leq o(1),\\a\eta _{u_n}+\beta\eta _{v_n}\geq0,\\\beta\eta _{u_n}+a\eta _{v_n}\geq0.\end{cases}
	\end{equation}
	Set
	\begin{equation*}
		\vartheta    _n=\Big\{\big(\eta_{u_n},\eta_{v_n}\big):\eta_{u_n}+\eta_{v_n}\leq o(1),a\eta_{u_n}+\beta\eta_{v_n}\geq0,\beta\eta_{u_n}+a\eta_{v_n}\geq0\Big\}.
	\end{equation*}
	It is easy to see that, for each $n\ge1$,  $\vartheta_n $ forms a triangular region with the vertex $(0,0)$,  and its diameter goes to 0 as $n\to +\infty$ if $a\neq\beta$, which then implies that
	\begin{equation*}
		\eta_{u_n}=e_{u_n}-k\xrightarrow{n}0,\ \eta_{v_n}=e_{v_n}-s\xrightarrow{n}0,
	\end{equation*}
	that is, $e_{u_n}\xrightarrow{n\to \infty} k, e_{v_n}\xrightarrow{n\to \infty} s.$
	This together with  (\ref{eq4.11}) give that,
	\begin{equation*}
		\rho_1=\frac{a^*}{2}(k+s).
	\end{equation*}
	Then, it follows from (\ref{eq4.5}) that $\hat{J}(u_0,v_0)=\frac{a^*}{2}(k+s)=\rho_1$, which implies that $(u_0,v_0)=\begin{pmatrix}\sqrt{k}Q,\sqrt{s}Q\end{pmatrix}$ is a ground state solution.
	
	\begin{lemma}\label{app2}
		For any $\theta\in(0,2\pi)$, $\left(u_0,v_0\right)=\left(\frac1{\sqrt{a}}Q\sin\theta,\frac1{\sqrt{a}}Q\cos\theta\right)$ is a ground state solution for (\ref{system4.1}) with $a=\beta$.
	\end{lemma}
	\noindent\textbf{Proof.} We now define
	\begin{equation}\label{eq4.15}
		P \left(u_1,u_2\right): =\frac{\int_{\mathbb{R}}\Big(|(-\Delta)^{\frac{1}{4}}u_1|^2+|(-\Delta)^{\frac{1}{4}}u_2|^2\Big)dx\int_{\mathbb{R}}\Big(|u_1|^2+|u_2|^2\Big)dx}{a\int_{\mathbb{R}}\Big(|u_1|^4+|u_2|^4+2|u_1|^2|u_2|^2\Big)dx},
	\end{equation}
	and consider the following minimization problem
	\begin{equation}\label{contra}
		p:=\inf_{(0,0)\neq(u_1,u_2)\in H^{\frac{1}{2}}(\mathbb{R})\times H^{\frac{1}{2}}(\mathbb{R})}P(u_1,u_2).
	\end{equation}
	Choosing $(\frac{1}{\sqrt{2a}}Q,\frac{1}{\sqrt{2a}}Q)$ as the test function, we then have
	\begin{equation}\label{eq4.16}
		p\leq P\Big(\frac1{\sqrt{2a}}Q,\frac1{\sqrt{2a}}Q\Big)=\frac{\|Q\|_2^2}{2a}=\frac{a^*}{2a}.
	\end{equation}\\
	It follows from Theorem 7.8 and Theorem 7.13 in \cite{x19} and the Gagliardo-Nirenberg inequality (\ref{eq1.10}) that, for any $(u_1,u_2)\in H^{\frac{1}{2}}(\mathbb{R})\times H^{\frac{1}{2}}(\mathbb{R})$,
	\begin{align}\label{eq4.17}
		P\left(u_1,u_2\right)& =\frac{\int_{\mathbb{R}}\Big(|(-\Delta)^{\frac{1}{4}}u_1|^2+|(-\Delta)^{\frac{1}{4}}u_2|^2\Big)dx\int_{\mathbb{R}}\Big(|u_1|^2+|u_2|^2\Big)dx}{a\int_{\mathbb{R}}\Big(|u_1|^4+|u_2|^4+2|u_1|^2|u_2|^2\Big)dx} \notag\\
		&\geq\frac{\int_{\mathbb{R}}\left|(-\Delta)^{\frac14}\sqrt{|u_1|^2+|u_2|^2}\right|^2dx\int_{\mathbb{R}}\left(\sqrt{|u_1|^2+|u_2|^2}\right)^2dx}{a\int _{\mathbb{R}}\Big(\sqrt{|u_1|^2+|u_2|^2} \Big)^4dx}\\&\geq\frac{\|Q\|_2^2}{2a},\notag
	\end{align}
	which, together with  (\ref{eq4.16}), implies that $p=\frac{\|Q\|_2^2}{2a}=\frac{a^*}{2a}$. Therefore, for any $\theta\in(0,2\pi)$,
	\begin{equation}\label{eq4.18}
		\Big(\frac{1}{\sqrt{a}}Q(x)\sin\theta,\frac{1}{\sqrt{a}}Q(x)\cos\theta\Big)
	\end{equation}
	is an optimizer of (\ref{contra}). Furthermore, it is easy to verify that if $\left ( u_1, u_2\right )$ is an optimizer of (\ref{contra}), then, up to scalings, $(u_1,u_2)$ also constitutes a ground state of (\ref{system4.1}). Consequently, if $(u_1,u_2)$ is the form of (\ref{eq4.18}),   it is a ground state of (\ref{system4.1}). \renewcommand{\qedsymbol}{}

\end{proof}







\section*{Acknowledgments}  The authors would like to express their sincere thanks to
the Editor and the Reviewers for the constructive comments to improve this paper.

\medskip
Received November 2024; 1st revision March 2025; 2nd revision April 2025; early access April 2025.
\medskip

\end{document}